\newtheorem{theorem}{Theorem}
\newtheorem{definition}[theorem]{Definition}
\newtheorem{proposition}[theorem]{Proposition}
\newtheorem{lemma}[theorem]{Lemma}
\newtheorem{corollary}[theorem]{Corollary}
\theoremstyle{definition}
\newtheorem{example}[theorem]{Example}
\newtheorem{remark}[theorem]{Remark}
\crefname{definition}{Definition}{Definition}
\crefname{theorem}{Theorem}{Theorem}
\crefname{lemma}{Lemma}{Lemma}
\crefname{proposition}{Proposition}{Proposition}
\crefname{corollary}{Corollary}{Corollary}
\crefname{equation}{}{Eq.}
\Crefname{equation}{Eq.}{Eq.}
\newcommand{\paren}[1]{\left(#1\right)}
\renewcommand{\x}{{x}}
\newcommand{\xM}{{X}}
\newcommand{\X}{\xM}
\newcommand{\xT}{{\mathpzc{X}}}
\newcommand{\calX}{{\mathcal{X}}}
\renewcommand{\y}{{y}}
\newcommand{\yM}{{Y}}
\newcommand{\Y}{\yM}
\newcommand{\calY}{{\mathcal{Y}}}
\newcommand{\tuck}{G_{\mathcal{T}}}
\newcommand{\lscond}{\kappa_{\star}}
\DeclareMathAlphabet{\mathpzc}{OT1}{pzc}{m}{it}
\newif\ifdraft
\title{Which constraints of a numerical problem cause ill-conditioning?}
\author*[1]{\fnm{Nick} \sur{Dewaele}} \email{nick.dewaele@kuleuven.be}
\author[1,2]{\fnm{Nick} \sur{Vannieuwenhoven}} \email{nick.vannieuwenhoven@kuleuven.be}
\affil[1]{KU Leuven, Department of Computer Science, Celestijnenlaan 200A, box 2402, B-3001 Leuven, Belgium}
\affil[2]{Leuven.AI, KU Leuven Institute for AI, B-3000 Leuven, Belgium}
\pacs[Mathematics Subject Classification]{
15A12, 
15A23, 
49Q12, 
53B20, 
15A69, 
65F35
}
\keywords{Underdetermined systems, condition number, low-rank matrix decomposition, Tucker decomposition}
\begin{document}

\abstract{
    Many numerical problems with input $\x$ and output $\y$ can be formulated as a system of equations $F(\x, \y) = 0$ where the goal is to solve for $\y$. The condition number measures the change of $\y$ for small perturbations to $\x$. From this numerical problem, one can derive a (typically underdetermined) relaxation by omitting any number of equations from $F$. We propose a condition number for underdetermined systems that relates the condition number of a numerical problem to those of its relaxations, thereby detecting the ill-conditioned constraints.
    We illustrate the use of our technique by computing the condition of two problems that do not have a finite condition number in the classic sense: two-factor matrix decompositions and Tucker decompositions. 
}

\maketitle

\ifdraft

\section{Introduction}
\label{sec: intro}

Any computational problem with input $x \in \mathcal X$ and output $y \in \mathcal Y$ can be characterised by defining a set $\mathcal{P} \subseteq \mathcal X \times \mathcal Y $ containing all admissible input-output pairs $(x,y) \in \mathcal{X} \times \mathcal{Y}$. From here on, we refer to $\mathcal{P}$ as the problem.
Given two problems $\mathcal{P}$ and $\mathcal{P}'$, we say that $\mathcal{P}'$ is \emph{less constrained} than $\mathcal{P}$ or a \emph{relaxation of $\mathcal P$} if $\mathcal{P} \subseteq \mathcal{P}'$. The following are typical examples of numerical problems and relaxations.
\begin{itemize}
    \item A problem defined by a system of equations $F(\x,\y) = 0$ can be relaxed by removing any number of equations.
    \item In many applications, a matrix $X \in \mathbb{R}^{m \times n}$ of a known low rank $k$ is decomposed as a product $X = LR$ where $L \in \mathbb{R}^{m \times k}$ and $R \in \mathbb{R}^{k \times n}$. In practice, this problem is usually made more constrained by imposing structure on the tuple $Y = (L,R)$, such as nonnegativity, orthogonality of the columns of $L$, or by imposing that $L$ contains a subset of the columns of $X$ \cite{trefethenNumericalLinearAlgebra1997,mahoneyCURMatrixDecompositions2009}. Such decompositions are especially preferred for large matrices of low rank \cite{halkoFindingStructureRandomness2011}.
    \item For a matrix $\xM \in \mathbb{R}^{m \times n}$ of rank $k$, computing an \emph{(arbitrary)} basis of the column space is less constrained than computing \emph{(specifically)} the first $k$ columns of $U$ in a singular value decomposition $X = U\Sigma V^T$.
    \item A \emph{Tucker decomposition} \cite{tucker1966some} of a tensor $\xT \in \mathbb{R}^{n_1 \times \dots \times n_D}$ of multilinear rank $(k_1, \dots, k_D)$ is a relaxation of the \emph{higher-order singular value decomposition} \cite{tucker1966some,DeLathauwer2000}. Similarly, computing a \emph{Tensor train} decomposition is a relaxation of the TT-SVD problem \cite{Oseledets2011}.
\end{itemize}

We say that a problem $\mathcal{P}$ is \emph{identifiable} if a unique $(x,y) \in \mathcal{P}$ exists for every input $x$. If this $y$ is a continuous function of $x$, then $\mathcal{P}$ is \emph{well-posed}. 
In numerical analysis, well-posed problems have a \emph{condition number}, which measures the local sensitivity of the output with respect to small changes in the input. The goal of this paper is to understand the condition number of $\mathcal{P}$ in relation to the condition number of any relaxation $\mathcal{P}'$ of $\mathcal{P}$.
The main obstacle is that \emph{$\mathcal{P}'$ may be too unconstrained to be well-posed and have a condition number in the usual sense}.

To elaborate the concept, consider this issue in the context of linear systems.
For $m \le n$, equip  $\mathbb{R}^m$ and $\mathbb{R}^n$ with an inner product and let $A \in \mathbb{R}^{m \times n}$ be a fixed matrix of rank $m$.
Consider the system $Ay = x$ for some input $x \in \mathbb{R}^m$. The sensitivity of ``solving for $y$'' can be interpreted in several ways.
\begin{enumerate}
    \item[0.] Since the problem is not identifiable, its forward error and condition number are left undefined.
    \item \label{item: minimise solution norm} We may add constraints to the system to obtain a unique solution.
    A common way to do this is to minimise $\norm{y}$ over all $y$ such that $Ay = x$ \cite[Section 5.5]{Golub2013}. This more constrained problem is well-posed if $A$ is fixed and can be solved by $y = A^\dagger x$ where $A^\dagger$ is the Moore--Penrose inverse of $A$.

    \item \label{item: Hausdorff distance}
    We write the solution corresponding to $x$ as a set $S_x := \left\{y\,\middle\vert\, Ay = x\right\} \subseteq \mathbb{R}^n$.
    In the language of \cite{dontchevImplicitFunctionsSolution2014}, the map $x \mapsto S_x$ is a \emph{set-valued solution map} and the forward error can be quantified in the \emph{Pompeiu--Hausdorff distance} $d_{PH}$. For two subsets $S, \tilde{S} \subseteq \mathbb{R}^n$, this is defined as 
    \begin{linenomath*}
    $$
    d_{PH}(S, \tilde{S}) := \max \Bigl\{ \,\sup_{y \in S} d(y, \tilde{S}), \sup_{\tilde{y} \in \tilde{S}}d(\tilde{y}, S)\, \Bigr\}
    ,$$
    \end{linenomath*}
    where $d(y,\tilde{S})$ is the distance from $y$ to its least-squares projection onto $\tilde{S}$. One may define a condition number that measures the error in this distance.
    To the best of our knowledge, this approach would be difficult to generalise to nonlinear problems, since it is computationally infeasible to compute the Pompeiu--Hausdorff distance for most sets.

    \item \label{item: affine Grassmannian} As before, the solutions are sets $S_x$, which are $(n-m)$-dimensional affine subspaces of $\mathbb{R}^n$. 
    The set of all such subspaces was called the \emph{affine Grassmannian} in \cite{limGrassmannianAffineSubspaces2021}. Since the affine Grassmannian is a Riemannian manifold, it has an induced distance (and hence a measure of foward error). The condition number with respect to this Riemannian distance can be studied using the techniques of \cite[Chapter 14]{Burgisser2013}.
\end{enumerate}

We propose a fourth, practical alternative, which does not require the space of solution sets to be a manifold (as in item \ref{item: affine Grassmannian}) and results in a condition number that can be computed using numerical linear algebra.
In our approach, we fix any solution $y_0$ corresponding to the noiseless input $x_0$. For a noisy input $x$, the condition number we propose satisfies the error bound 
\begin{equation}
    \label{eq: error bound LS cond informal}
\min_{y\colon (x,y) \in \mathcal P} d_{\mathcal Y}(y_0, y) \le \lscond[\mathcal P](x_0, y_0) \cdot d_{\mathcal X}(x_0, x) + o(d_{\mathcal X}(x_0, x))
\,\,\text{as}\,\, x \to x_0
,\end{equation}
where $\mathcal P = \{(x,y) \in \mathbb{R}^{m} \times \mathbb{R}^n \,\vert\, Ay = x\}$ and $\lscond[\mathcal P](x_0, y_0)$ is the proposed condition number at $(x_0, y_0)$. The left-hand side is the optimal forward error over all values of $y$ that solve $\mathcal P$ given $x$. The higher-order term $o(d_{\mathcal X}(x_0, x))$ can be neglected if $x$ is close to $x_0$.

\begin{figure}
    \centering
    \includegraphics[width=0.9\textwidth]{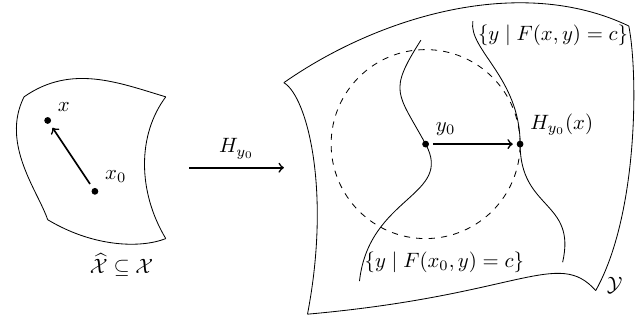}
    \caption{Simplified view of the solution sets of an FCRE. The point $y_0$ is a particular solution of $F(x_0,y) = c$ for the noiseless input $x_0$ and $x$ is a noisy input close to $x_0$. The point $H_{\y_0}(\x)$ is the projection of $\y_0$ onto the solution set $\{\y \mid F(\x,\y) = c\}$.
    If $F$ is linear, then the solution sets are affine spaces which are parallel to each other for all values of $x$.}
    \label{fig: LS solution map}
\end{figure}

We use this approach to derive an expression of the condition number of a wide class of problems we call \emph{feasible constant-rank equations (FCREs)} and write as $F(x,y) = c$ for some constant $c$. Deferring a precise definition to
\cref{sec: statement of main results}, FCREs can be defined informally as systems of (linear or non-linear) equations whose solution sets are smooth\footnote{Here and in the rest of the paper, \emph{smooth} means infinitely differentiable.} manifolds whose points depend smoothly on the input, generalising the linear problem above.
Many problems, such as matrix and tensor decompositions, can be modelled as FCREs, even though they are usually not thought of as a system of equations. A fortiori, if $G: \mathbb{R}^m \to \mathbb{R}^n$ is any polynomial map, the inverse problem $F(x,y) := G(y) - x = 0$ is an FCRE.
The error measure introduced in \cref{eq: error bound LS cond informal} is visualised for FCREs in \cref{fig: LS solution map}.
Our main result can be stated as follows. 

\begin{theorem}[informal version of \cref{thm: closest solution exists}]
    \label{thm: main theorem informal}
    Let $\mathcal{X}, \mathcal{Y}$, and $\mathcal Z$ be smooth manifolds where $\mathcal{X}$ and $\mathcal{Y}$ have a Riemannian metric. Let $c \in \mathcal Z$ be any constant and let $F: \mathcal X \times \mathcal Y \to \mathcal Z$ be a map so that the equation $F(x,y) = c$ is an FCRE. Finally, let $(x_0,y_0)$ be any pair that solves $F(x_0,y_0) = c$.
    Writing $\mathcal{P} := F^{-1}(c)$, the condition number in \cref{eq: error bound LS cond informal} is 
    \begin{linenomath*}
    $$
    \lscond[\mathcal{P}](x_0,y_0) = \norm{\left(\pdv{y}F(x_0,y_0)\right)^\dagger \pdv{x}F(x_0, y_0)}
    ,$$
    \end{linenomath*}
    where $\norm{\cdot}$ is the spectral norm.
\end{theorem}

The most rudimentary application of our results is the sensitivity of linear systems. Applying \cref{thm: main theorem informal} to the FCRE $Ay - x = 0$ gives $\norm{A^\dagger}$ as the (absolute) condition number. Note that this also the condition number of the problem $x \mapsto A^\dagger x$ described in item \ref{item: minimise solution norm} above \cite[eq. 12.2]{trefethenNumericalLinearAlgebra1997}.

We will show that relaxing a problem defined by an FCRE can never increase the condition number defined in this sense. The precise statement is as follows.
\begin{corollary}
    \label{corollary: relaxation decreases condition number}
    Given $R\colon \calX \times \widehat{\calY} \to \mathcal Z$ and $S\colon \calX \times \calY \to \mathcal Z$ where $\widehat{\calY} \subseteq \calY$ is a Riemannian submanifold of $\calY$, consider the FCREs $R(\x,\y) = \hat{c}$ and $S(\x,\y) = c$ and assume that the former is more constrained. If $R(\x_0,\y_0) = \hat{c}$ for some $(\x_0,\y_0)$, then
    \begin{linenomath*}
    \[
    \lscond[S^{-1}(c)](\x_0,\y_0) \le \lscond[R^{-1}(\hat{c})](\x_0,\y_0)
    .\]
    \end{linenomath*}
\end{corollary}

Hence, if a problem $\mathcal{P}$ can be relaxed to $\mathcal{P}'$, then the condition number of $\mathcal{P}'$ is a lower bound for the condition number of $\mathcal{P}$. This is essentially because relaxing a problem adds more possible solutions, so that the left-hand side of \cref{eq: error bound LS cond informal} may decrease but not increase.
This identity is useful for explaining the condition of a problem $\mathcal{P}$: if $\mathcal{P}$ can be relaxed to $\mathcal{P}'$, and $\mathcal{P}'$ is ill-conditioned, then so is $\mathcal{P}$. Conversely, if $\mathcal{P}$ is ill-conditioned but its relaxation $\mathcal{P}'$ is not, then the additional constraints that $\mathcal{P}$ adds to $\mathcal{P}'$ explain the condition of $\mathcal{P}$. 

The simplest instance of this is again the underdetermined linear system $Ay = x$ with $A \in \mathbb{R}^{m \times n}$, whose condition number is $\norm{A^\dagger}$ at any solution pair $(x_0, y_0)$. 
A more constrained system could introduce $n - m$ additional equations and  be written as $\overline{A}y = \overline{x}$ where $\overline{A} \in \mathbb{R}^{n \times n}$, the first $m$ rows of $\overline{A}$ are $A$, and the first $m$ components of $\overline{x}$ are $x$. The absolute condition number of this system is $\norm{\overline{A}^{-1}}$. By the singular value interlacing theorem \cite[Theorem 4.3.17]{hornMatrixAnalysis2012}, $\norm{\overline{A}^{-1}} \ge \norm{A^\dagger}$, which confirms \cref{corollary: relaxation decreases condition number}.

Suppose that a problem $\mathcal{P}$ has a solution pair $(x_0, y_0)$ and $\mathcal{P}'$ is a relaxation of $\mathcal{P}$.
We say that $\mathcal{P}$ is an \emph{optimal refinement} of $\mathcal{P}'$ at $(x_0, y_0)$ if relaxing $\mathcal{P}$ to $\mathcal{P}'$ does not decrease its condition number, i.e., $\lscond[\mathcal{P}](x_0, y_0) = \lscond[\mathcal{P}'](x_0, y_0)$. 
By the above example, if we have an underdetermined linear system $Ay = x$, then imposing the constraint that $\norm{y}$ be minimal gives an optimal refinement. Thus, our results justify the widespread use of this minimality constraint.

In another example, which we elaborate in \cref{sec: Tucker}, 
we present a family of matrices $\{X_{\varepsilon}\}_{\varepsilon \in \mathbb{R}^+}$ for which the computation of a singular value decomposition $X_{\varepsilon} = U\Sigma V^T$ can be arbitrarily ill-conditioned, but the less constrained \emph{orthogonal Tucker decomposition} (which is the same as a singular value decomposition, except it does not enforce $\Sigma$ to be diagonal) has a near-optimal condition number. Thus, the singular value decomposition is not an optimal refinement of the Tucker decomposition. That is, for a perturbation of $X_{\varepsilon}$, it is possible to find a Tucker decomposition close to the decomposition of $X_{\varepsilon}$, but only if its second factor is not diagonal.
We may say that the constraint that $\Sigma$ be diagonal is responsible for the difference in condition.

\subsection{Notation}
The $n \times n$ identity matrix is denoted by $\mathds{I}_n$. 
For a fixed $\x$, we write 
\begin{linenomath*}
\[
F_\x(\y) := F(\x,\y) \quad\text{and}\quad 
F^{-1}_\x(c) := \{y \mid F(\x,\y) = c\}. 
\]
\end{linenomath*}
We define the following manifolds: $\mathrm{St}(m,n) := \{U \in \mathbb{R}^{m \times n} \,\vert\, U^TU = \mathds{I}_n\}$ is the Stiefel manifold, $O(n) := \mathrm{St}(n,n)$ is the orthogonal group, and $\mathbb{R}^{m \times n}_k$ is the manifold of $m \times n$ matrices of rank $k$. The $k$th largest singular value of a matrix $A$ is $\sigma_k(A)$. The canonical basis vectors of $\mathbb{R}^{n}$ are $e_1,\dots,e_n$. The tangent space to a manifold $\mathcal M$ at $p$ is $T_p\mathcal{M}$. A generic vector in this space is denoted as $\dot{p}$.

\subsection{Summary of contributions and outline}

The main contribution of this work is an asymptotically sharp estimate for the optimal forward error of a general FCRE.
This error estimate is given by \cref{thm: closest solution exists} and \cref{prop: condition of inverse problems}, which are proved in \cref{sec: main proofs}. 
The advantage of our approach is that it requires little geometric information about the solution sets.
For certain underdetermined systems, though, the solution set can be seen as a unique point on a quotient manifold. We compare this point of view to our approach in \cref{sec: orth symmetries}. 

Another contribution is the computation of the condition number of two specific problems of independent interest: two-factor matrix decomposition and Tucker decomposition of tensors. They are studied in \cref{sec: 2fd} and \cref{sec: Tucker}, respectively. Numerical experiments for the accuracy of the error bound \cref{eq: 1st order error bound LS} in the case of the Tucker decomposition are presented in \cref{sec: experiments}.

We start with an overview of the theory of condition numbers of nonlinear equations in the next section.

\section{Classic theory of condition}
\label{sec: prelims}

Rice \cite{Rice1966} defined the condition number of a map $H: \mathcal{X} \to \mathcal{Y}$ between metric spaces at a point $x_0 \in \mathcal X$ as
\begin{linenomath*}
\begin{equation}
    \label{eq: def cond limsup}
\kappa[H](x_0) := \limsup_{x \to x_0} \frac{d_{\mathcal Y}(H(x_0), H(x))}{d_{\mathcal X}(x, x_0)}
\end{equation}
\end{linenomath*}
where $d_{\mathcal X}$ and $d_{\mathcal Y}$ are the distances in $\mathcal X$ and $\mathcal Y$, respectively.
Equivalently, $\kappa[H](x_0)$ is the smallest number $\kappa$ such that
\begin{linenomath*}
\begin{equation}
    \label{eq: 1st order err bound (general)}
d_{\calY}(H(\x_0), H(\x)) \le \kappa \cdot d_{\calX}(\x_0, \x) + o(d_{\calX}(\x_0, \x))
\quad\text{as}\quad 
\x \to \x_0
.\end{equation}
\end{linenomath*}
The latter property is called \emph{asymptotic sharpness} of the bound \cref{eq: 1st order err bound (general)}.
Since \cref{eq: def cond limsup} depends on $H$ (and consequently its domain) as well as the distances $d_{\mathcal X}$ and $d_{\mathcal Y}$, it is more precise to call $\kappa[H](x_0)$ \emph{the condition number of $H$ with respect to $d_{\mathcal X}$ and $d_{\mathcal Y}$}, but we refer to it simply as \emph{the condition number of $H$}. 
Many condition numbers in the literature are instances of \cref{eq: def cond limsup} for some choice of $\mathcal{X}, \mathcal{Y}$, and their distances. 
The following examples are common.
\begin{itemize}
    \item If $\mathcal X$ and $\mathcal Y$ are normed vector spaces, there is a natural distance $d_{\calX}(x, x') := \norm{x - x'}$ and likewise in $\mathcal Y$. In this case, \cref{eq: def cond limsup} is the \emph{absolute normwise condition number} $\kappa_{\text{abs}}[H]$. Alternatively, we may fix $x_0$ and define $d_{\cal X}(x, x') := \norm{x - x'}/\norm{x_0}$ and likewise in $\calY$. This corresponds to the \emph{relative normwise condition number} at $x_0$ \cite[Lecture 12]{trefethenNumericalLinearAlgebra1997}.
    \item If $\calX$ is not a linear space, then \cref{eq: def cond limsup} is often referred to as a \emph{structured} condition number \cite{hackbusch_interconnection_2017,Arslan2019}. For instance, if $\calX \subseteq \mathbb{R}^{n \times n}$ is a manifold of structured, orthogonal, or low-rank matrices, then \cref{eq: def cond limsup} only considers those matrices $x$ as possible perturbations. 
    \item If $\calX$ and $\calY$ are expressed in coordinates and one is interested in perturbations of only one coordinate, a \emph{componentwise} condition number may be used \cite{gohberg_mixed_1993}. This is less straightforward to define as a special case of \cref{eq: def cond limsup} and will not be considered in this paper.
\end{itemize}
We will consider general maps $H$ where $\calX$ and $\calY$ are Riemannian manifolds.
In this case, Rice's theorem \cite{Rice1966} says that $\kappa[H](x_0) = \norm{DH(x_0)}$ where $DH$ is the differential of $H$ and $\norm{\cdot}$ is the operator norm. In particular, for a map $H$ between Euclidean spaces, $\kappa_{\text{abs}}[H]$ is the spectral norm of the Jacobian matrix of $H$, as in \cite[Lecture 12]{trefethenNumericalLinearAlgebra1997}.

Rice's definition \cref{eq: def cond limsup} can be extended to numerical problems $\mathcal{P} \subseteq \calX \times \calY$ that cannot be described as a map $H: \calX \to \calY$, such as the solution of a polynomial equation $\sum_{i=0}^d a_i y^i = 0$, where $x = (a_0, \dots, a_d)$, $a_d \ne 0$, and $d \ge 2$. Such problems may have multiple isolated solutions. Demmel \cite{demmel_condition_1987} defined a condition number for univariate polynomial rootfinding, which was generalised by Shub and Smale \cite{shub_complexity_1993} to homogeneous polynomial systems and by B\"urgisser and Cucker \cite[Section 14.3]{Burgisser2013} to problems $\mathcal{P} \subseteq \calX \times \calY$ where $\calX$ and $\calY$ are Riemannian manifolds. 

This extended definition of the condition number goes as follows: let the problem $\mathcal{P}$ be defined by a system of equations $F(x,y) = c$ where $F: \calX \times \calY \to \mathcal{Z}$ is a smooth map (meaning infinitely differentiable) and $c \in \mathcal Z$ is a constant. If $(x_0, y_0)$ is any solution and $\pdv{y} F(x_0, y_0)$ is invertible, the implicit function theorem implies the existence of a unique smooth map, often called the \emph{solution map} $H: \widehat{\calX} \to \calY$ defined on a neighbourhood $\widehat{\calX} \subseteq \calX$ of $x_0$ such that $H(x_0) = y_0$ and $F(x, H(x)) = c$ for all $x \in \widehat{\calX}$. The condition number of $\mathcal{P}$ as discussed in \cite[Section 14.3]{Burgisser2013} is defined as $\kappa[F](x_0, y_0) := \kappa[H](x_0)$, where the right-hand side is given by \cref{eq: def cond limsup}. Working this out gives 
\begin{equation}
    \label{eq: formula cond BC}
\kappa[F](x_0,y_0) = \norm{\left(\pdv{y} F(x_0,y_0)\right)^{-1} \pdv{x} F(x_0,y_0)}
.\end{equation}

\begin{remark}
    \label{rmk: condition depends on solution}
    If the equation $F(x,y) = c$ has two different solutions $y_0$ and $y_0'$ for the same input $x_0$, then their solution maps are different and may generally have a different condition number. Hence, the condition number may depend on the solution as well as the input. 
\end{remark}

\begin{example}[Eigenvalue problems]
    Let $\mathcal{X} := \mathbb{C}^{n \times n}$ and $\mathcal{Y} = \mathbb{C}$ and define $F(X, \lambda) := \det(X - \lambda\mathds{I})$. The eigenvalues of $X$ are precisely the solutions of $F(X,\lambda) = 0$.
    Endow $\mathbb{C}^{n}$ with the Hermitian inner product $\left\langle\cdot,\cdot\right\rangle$ and $\mathbb{C}^{n \times n}$ with the spectral norm.
    Let $X_0$ be a matrix with a simple eigenvalue $\lambda_0$ and left and right eigenvectors $v$ and $w$, respectively. Then $\kappa[F](X_0,\lambda_0) = \frac{\norm{v}\norm{w}}{\left\vert \left\langle v, w\right\rangle\right\vert}$.
    This was shown in \cite[IV--Theorem 2.3]{Stewart1990} using Gershorin's theorem and in \cite[Proposition 14.15]{Burgisser2013} using the implicit function theorem. For a generic $X$, all its eigenvalues have a different condition number, as per \cref{rmk: condition depends on solution}.
\end{example}

\subsection{Related work}

The condition number of systems with unique solutions is well understood \cite[Chapter 14]{Burgisser2013}. We know of two works studying a condition number in the sense of \cref{eq: error bound LS cond informal}.
First, Dedieu \cite{dedieu_approximate_1995} introduced the \emph{inverse condition number} of a numerical problem $G: \calY \to \calX$, where $\calY$ and $\calX$ are Euclidean spaces. The expression for this condition number is equivalent to \cref{prop: condition of inverse problems}, but its interpretation is different. Dedieu interpreted $\calY$ as the input space, $\calX$ as the output space, and was interested in measuring backward errors. Conversely, we study the equation $G(\y) = \x$ as a problem taking $\calX \to \calY$ and consider the forward error as in \cref{eq: 1st order error bound LS}. 
Another occurrence of a latent condition number is due to Vannieuwenhoven, who studied the sensitivity of the tensor rank decomposition in its factor matrix representation \cite{Vannieuwenhoven2017}.
\Cref{thm: main theorem informal} is a generalisation of both of these results. 
Another approach for defining a condition number of certain underdetermined systems is based on quotient manifolds. We explain this in detail in \cref{sec: orth symmetries}.

For several problems in numerical analysis, there is a connection between first-order sensitivity as in \cref{eq: 1st order err bound (general)}, the distance to the nearest ill-posed problem \cite{demmel_condition_1987}, and the convergence of iterative algorithms \cite{shub_complexity_1993}. 
Constants appearing in estimates of any of these measures are often called condition numbers, even if the asymptotic sharpness of the estimate is not demonstrated.
Specifically, D\'egot \cite{degot_condition_2000} introduced a condition-like number for underdetermined homogeneous polynomial systems that measures distance to ill-posedness. The same number provides an error estimate of the solution, but little was said about the asymptotic sharpness of this estimate. Dedieu and Kim \cite{dedieu_newtons_2002} analysed a generalised Newton method for solving the equation $G(x) = 0$, where $\rank DG(x)$ is constant. The rate of convergence can be estimated in terms of $\norm{DG(x)^\dagger}$, i.e., the expression appearing in \cref{prop: condition of inverse problems}.
In the context of linear least-squares problems of the form $Ax = b$, the similar expression $\norm{A}\norm{A^\dagger}$ is sometimes referred to as the condition number of $A$, even if this number is not the condition number of the problem as defined by \cref{eq: def cond limsup} (see e.g. \cite[Corollary III.3.10]{Stewart1990} and the discussion thereafter).

Another conceptually similar condition number is that of Riemannian approximation \cite{Breiding2019}. In that context, the problem is to project a variable point $x \in \mathbb{R}^n$ onto a fixed manifold $\mathcal M \subseteq \mathbb{R}^n$. The latent condition number, by contrast, measures how a fixed point $\y_0$ is projected onto a variable solution set $F_{\x}^{-1}(c)$.

\section{Proposed theory of condition}
\label{sec: statement of main results}
The condition number in the sense of Rice is defined for the evaluation of a map $H: \calX \to \calY$ and for the solution of systems of equations $F(x,y) = c$ where $\pdv{y}F(x,y)$ is invertible. In this section, we loosen this constraint on $\pdv{y}F(x,y)$ and propose a corresponding condition number.
\begin{definition}
    \label{def: FCRE}

    Let $\calX, \calY, \mathcal{Z}$ be smooth manifolds and let $c \in \mathcal Z$ be a constant. Let $F: \calX \times \calY \rightarrow \mathcal Z$ be a smooth map. We call the equation $F(\x,\y) = c$ a \emph{feasible constant-rank equation (FCRE)} if the following holds:
    \begin{enumerate}
        \item for all $\x \in \calX$, there exists a point $\y \in \calY$ such that $F(\x,\y) = c$,
        \item there exists a number $r \in \mathbb{N}$ so that $\rank \pdv{\y} F(\x, \y) = r$ and $\rank DF(\x,\y) = r$ for all $\x, \y \in \calX \times \calY$.
    \end{enumerate}
\end{definition}

Note that if $r = \dim \calY = \dim \mathcal Z$, then $\pdv{y}F(x,y)$ is invertible, which is the usual condition under which the condition number in the sense of \cite[Section 14.3]{Burgisser2013} is defined.
Note as well that if a map $F$ only satisfies this definition locally, the restriction of $F$ to a subset of its domain defines an FCRE.

The idea behind the second item in \cref{def: FCRE} is as follows.
If $\rank DF$ is constant, then $F$ is a \emph{map of constant rank}, which is a fundamental concept in differential geometry \cite[Chapter 4]{Lee2013}. One such example are polynomial maps: if $F: \mathbb{R}^m \to \mathbb{R}^n$ is a polynomial, then the locus of points $p \in \mathbb{R}^m$ such that $\rank DF(p)$ is \emph{not} maximal is a subvariety of $\mathbb{R}^m$ of dimension less than $m$. Thus, polynomials have constant rank almost everywhere \cite[Proposition A.35]{Burgisser2013}.

For any map $F$ of constant rank, the problem $\mathcal{P} := F^{-1}(c)$ is a smooth manifold of dimension $\operatorname{null}DF$ where $\operatorname{null}$ is the nullity \cite[Theorem 4.12]{Lee2013}. It then follows that $\rank \pdv{F}{y} = \rank DF$ if and only if $\dim \mathcal{P} = \dim \mathcal{X} + \operatorname{null} \pdv{F}{y}$. This should be compared to identifiable problems, which have only $\dim \mathcal X$ degrees of freedom (since every $x \in \mathcal{X}$ would determine a unique $(x,y) \in \mathcal{P}$). By the foregoing, FCREs are defined exactly by the maps of constant rank that offer $\operatorname{null} \pdv{F}{y}$ additional degrees of freedom.

Our main theorem underlies the definition of the condition number of an FCRE. It is proved in \cref{sec: main proofs}.
\begin{theorem}
    \label{thm: closest solution exists}

    Let $F(\x,\y) = c$ be an FCRE as in \cref{def: FCRE} and let $(\x_0, \y_0)$ be any pair such that $F(\x_0,\y_0) = c$. If $\mathcal X$ and $\mathcal Y$ are Riemannian manifolds, then there exist a neighbourhood $\widehat{\calX} \subseteq \calX$ of $\x_0$ and a smooth map, called the \emph{canonical solution map}
    \begin{linenomath*}
    \begin{align*}
    H_{\y_0}: \widehat{\calX} &\to \calY \\
    \x &\mapsto \arg\min_{\substack{
        \y \in \calY\\
        F(\x,\y) = c
    }} d_{\calY}(\y_0, \y), 
    \end{align*}
    \end{linenomath*}
    where $d_{\calY}$ is the geodesic distance in $\calY$. Its differential at $x_0$ is
    \begin{linenomath*}
    $$
    DH_{y_0}(x_0) = 
    \left(\pdv{y}F(x_0,y_0)\right)^\dagger \pdv{x}F(x_0,y_0)
    .$$
    \end{linenomath*}
\end{theorem}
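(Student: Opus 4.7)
The plan is to characterise $H_{\y_0}(\x)$ as the unique critical point of the constrained optimisation $\min_\y \tfrac{1}{2} d_\calY(\y_0,\y)^2$ subject to $F(\x,\y) = c$ near $\y_0$, and then to apply the implicit function theorem (IFT) to the first-order (KKT) optimality conditions so as to obtain smoothness in $\x$.

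A preliminary step is to replace $F$ locally by a submersion. By the constant-rank theorem, the image of $F$ near $(\x_0,\y_0)$ is a smooth $r$-dimensional submanifold $\mathcal{S} \subseteq \mathcal{Z}$ containing $c$, and composing $F$ with a chart of $\mathcal{S}$ yields a smooth local map $\tilde F$ taking values in $\mathbb{R}^r$ such that $\tilde F(\x,\y) = 0$ if and only if $F(\x,\y) = c$, and for which $\partial_\y \tilde F$ has full row rank at every nearby point. In particular, $\calM_\x := \{\y \in \calY : F(\x,\y) = c\}$ is a smooth codimension-$r$ submanifold of $\calY$ for each $\x$ close to $\x_0$.

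With this reduction in place, I would form the Lagrangian $\mathcal{L}(\y,\mu;\x) = \tfrac{1}{2} d_\calY(\y_0,\y)^2 + \mu^{\top}\tilde F(\x,\y)$, well defined for $\y$ within the injectivity radius of $\y_0$, and apply the IFT to the KKT system $\nabla_\y \mathcal{L} = 0$, $\tilde F(\x,\y) = 0$ in the unknowns $(\y,\mu)$. At $(\y_0, 0, \x_0)$ this system is satisfied trivially, and its Jacobian in $(\y,\mu)$ has the standard saddle-point form, with diagonal blocks $\nabla_\y^2 \tfrac{1}{2} d_\calY(\y_0,\y_0)^2 = I$ and $0$ and off-diagonal blocks $\partial_\y \tilde F(\x_0,\y_0)$ and its transpose; this block matrix is invertible because $\partial_\y \tilde F$ has full row rank $r$. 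The IFT therefore produces smooth maps $\y^\ast(\x)$ and $\mu^\ast(\x)$ on a neighbourhood $\widehat{\calX}$ of $\x_0$ with $\y^\ast(\x_0) = \y_0$ and $\mu^\ast(\x_0) = 0$, and I set $H_{\y_0}(\x) := \y^\ast(\x)$.

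The main obstacle is confirming that $H_{\y_0}(\x)$ is actually the \emph{closest} point of $\calM_\x$ to $\y_0$, not merely a local critical point. The strong second-order sufficient condition at $(\y_0,0)$ holds because the Hessian of $\mathcal{L}$ restricted to $T_{\y_0}\calM_0$ is the identity; by continuity it remains positive definite at $(\y^\ast(\x),\mu^\ast(\x))$ for $\x$ close to $\x_0$, which makes $H_{\y_0}(\x)$ a strict local minimiser on $\calM_\x$. To upgrade this to the global minimum on a fixed closed ball $\overline{B(\y_0,\delta)} \subseteq \calY$, I would shrink $\delta$ and $\widehat{\calX}$ jointly so that the Lagrangian Hessian stays positive definite and $\partial_\y \tilde F$ stays full rank throughout $\overline{B(\y_0,\delta)}$; a compactness argument then shows that any candidate minimiser in this ball must be a KKT point, and the local uniqueness provided by the IFT forces it to coincide with $H_{\y_0}(\x)$. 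Extracting these uniform estimates from a local application of the IFT is the most delicate part of the argument.
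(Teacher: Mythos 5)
Your proposal is correct and follows essentially the same route as the paper: a constant-rank reduction of $F$ to an $\mathbb{R}^r$-valued submersion, the implicit function theorem applied to the first-order optimality conditions of $\min_{\y}\tfrac12 d_\calY(\y_0,\y)^2$ over the solution set (with invertibility supplied by the fact that the Hessian of the squared distance at its centre is the identity), and a compactness argument on a small geodesic ball to identify the resulting critical point with the actual minimiser. The only difference is one of implementation: the paper encodes normality to the solution set via an explicit smooth frame $E_1,\dots,E_{n-r}$ of $\ker \pdv{\y}F(\x,\y)$ and the equations $g_{\calY}(E_i,\log_{\y}\y_0)=0$, whereas you use Lagrange multipliers and the invertibility of the saddle-point KKT Jacobian.
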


That is, out of all possible solution maps, $H_{\y_0}$ locally minimises the distance to $\y_0$. \Cref{fig: LS solution map} shows a visualisation of $H_{\y_0}$. The preceding theorem allows us to define our primary object of interest.

\begin{definition}
    \label{def: cond FCRE}
    In the context of \cref{thm: closest solution exists}, the \emph{latent condition number} of $F$ at $(\x_0, \y_0)$ is
    \begin{linenomath*}
        \[
    \lscond[F^{-1}(c)](\x_0, \y_0) := \kappa[H_{\y_0}](\x_0)
    = \norm{
        \left(\pdv{y}F(x_0,y_0)\right)^\dagger \pdv{x}F(x_0,y_0)}
    \]
    \end{linenomath*}
    where $\norm{\cdot}$ is the operator norm.
\end{definition}

Note that this is exactly the generalisation of \cref{eq: formula cond BC} that would be obtained if $\pdv{F}{y}$ in \cref{eq: formula cond BC} were naively replaced by a pseudoinverse. The value of \cref{thm: closest solution exists} is that it shows that this generalised formula has a precise interpretation: $\lscond$ expresses whether the equation $F(\x,\y) = c$ has a solution close to $\y_0$ if $\x$ is a slight perturbation of $\x_0$. If $d_{\calX}$ and $d_{\calY}$ are the geodesic distances in $\calX$ and $\calY$, respectively, we have the following asymptotically sharp error bound:
\begin{linenomath*}
\begin{equation}
    \label{eq: 1st order error bound LS}
\min_{\substack{\y \in \calY,\\ F(\x,\y) = c}} d_{\calY}(\y_0, \y) \le \lscond[F^{-1}(c)](\x_0, \y_0) \cdot d_{\calX}(\x_0, \x) + o(d_{\calX}(\x_0, \x)) 
\,\,\text{as}\,\, 
\x \to \x_0
.\end{equation}
\end{linenomath*}
Since this is a bound on the asymptotic behaviour as $\x \to \x_0$, the same bound holds for any distance $d$ such that $d(\x_0, \x) = d_{\calX}(\x_0, \x)(1 + o(1))$ as $\x \to \x_0$ and likewise for $d_{\calY}$. For instance, if $\mathcal{X}$ is an embedded Riemannian submanifold of a Euclidean space $\mathcal{E}$, we may take $d$ to be the Euclidean distance in $\mathcal{E}$. Then \cref{eq: 1st order error bound LS} gives a bound in the (more practical) Euclidean distance $d$.

\begin{remark}
    The classic condition number of equations on manifolds, as defined in \cite[Section 14.3]{Burgisser2013}, requires a \emph{unique} solution map at the given solution pair $(x_0, y_0)$.
    If this map does not exist or is not unique, the condition number is either undefined or infinite by definition \cite{Breiding2018a}.
    If the classic condition number of an FCRE is finite, the unique solution map is the map from \cref{thm: closest solution exists} and the condition number is the latent condition number.
\end{remark}

The condition number of a relaxation of a FCRE can be used as a lower bound for the condition number of the original FCRE, as stated in \cref{corollary: relaxation decreases condition number}.
In other words, if a (relaxed or underdetermined) problem has a high latent condition number, then all ways to make it well-posed by adding constraints will be ill-conditioned. This is the intuition behind the name \emph{latent condition number}.
\Cref{corollary: relaxation decreases condition number} is a straightforward consequence of the definition, but we prove it here for completeness.

\begin{proof}[Proof of \cref{corollary: relaxation decreases condition number}.]
    Let $H_R$ and $H_S$ be the solution maps arising from the application of \cref{thm: closest solution exists} to $R$ and $S$, respectively. For all $x \in \calX$ sufficiently close to $x_0$, we have 
    \begin{align*}
    d_{\calY}(H_S(x_0), H_S(x)) &= \min_{
        \substack{
            y \in \mathcal Y\\
            S(x,y) = c}
        } d_{\calY}(y_0, y)
    .\end{align*}
    An upper bound on this can be obtained by restricting the domain of the minimum and applying the identity that $d_{\calY}(y, y') \le d_{\widehat{\calY}}(y,y')$ for all $y,y' \in \calY$.
    Thus, 
    \begin{linenomath*}
    $$
    d_{\calY}(H_S(x_0), H_S(x)) \le \min_{
        \substack{
            y \in \widehat{\mathcal Y}\\
            R(x,y) = \hat{c}}
        } d_{\widehat{\calY}}(y_0, y)
    =
    d_{\widehat{\calY}}(H_R(x_0), H_R(x))
    $$
    \end{linenomath*}
    so that the result follows from \cref{eq: def cond limsup}.
\end{proof}

An important class of systems of equations are equations of the form $G(\y) - \x = 0$ for some smooth map $G: \calY \to \mathbb{R}^m$. 
In this case, \cref{thm: closest solution exists} specialises to the following Riemannian generalisation of \cite[Theorem C]{dedieu_approximate_1995}.

\begin{corollary}
    \label{prop: condition of inverse problems}
    Let $\calY$ be a Riemannian manifold and let $G: \calY \to \mathbb{R}^{m}$ be a smooth map such that $\rank DG(\y)$ is constant.
    Pick any point $(\x_0,\y_0)$ on the graph of $G$.
    Then $\y_0$ has a neighbourhood $\widehat{\calY}$ such that $\calX := G(\widehat{\calY})$ is an embedded submanifold of $\mathbb{R}^{m}$.
    Define
    \begin{linenomath*}
    \begin{align*}
        F: \calX \times \widehat{\calY} &\to \mathbb{R}^{m} \\
        (\x,\y) &\mapsto G(\y) - \x
    .\end{align*}
    \end{linenomath*}
    Then $F(\x,\y) = 0$ is an FCRE and $\lscond[F^{-1}(0)](\x_0,\y_0) = \norm{DG(\y_0)^\dagger}$.
\end{corollary}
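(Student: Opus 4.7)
The plan is to verify three claims in order: that $\calX = G(\widehat{\calY})$ is an embedded submanifold, that $F(\x,\y)=0$ is an FCRE, and that the condition number has the stated form via \cref{thm: formula underdetermined condition number}. The submanifold structure follows from the constant-rank theorem: since $\rank DG \equiv r$ on $\calY$, near $\y_0$ and $G(\y_0) = \x_0$ there exist charts in which $G$ becomes a linear projection of rank $r$. Shrinking $\widehat{\calY}$ so that this local factorisation is injective, the image $\calX = G(\widehat{\calY})$ is an $r$-dimensional embedded submanifold of $\mathbb{R}^n$ whose tangent space at any $\x = G(\y)$ is $T_\x \calX = \mathrm{image}\, DG(\y)$.

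Next I would verify the two conditions in \cref{def: FCRE}. Feasibility is immediate by construction: every $\x \in \calX$ equals $G(\y)$ for some $\y \in \widehat{\calY}$, hence $F(\x,\y) = 0$. For the rank conditions, $\pdv{\y} F(\x,\y) = DG(\y)$ has constant rank $r$ by assumption. The full differential acts on $(u,v) \in T_\x \calX \times T_\y \widehat{\calY}$ by $DF(\x,\y)(u,v) = DG(\y) v - u$; since $u$ itself ranges over $T_\x \calX = \mathrm{image}\, DG(\y)$, the image of $DF(\x,\y)$ coincides with $\mathrm{image}\, DG(\y)$, so $\rank DF(\x,\y) = r$ as well, and both conditions hold with the same integer $r$.

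Finally, applying \cref{thm: formula underdetermined condition number} gives
\[
\kappa_{LS}[F](\x_0,\y_0) = \norm{DG(\y_0)^\dagger \, \pdv{\x} F(\x_0,\y_0)} = \norm{DG(\y_0)^\dagger \circ (-\iota)},
\]
where $\iota: T_{\x_0}\calX \hookrightarrow \mathbb{R}^n$ is the canonical inclusion and the sign is irrelevant for the operator norm. The key observation is that $T_{\x_0}\calX = \mathrm{image}\, DG(\y_0)$ by the first step, while the defining properties of the Moore--Penrose pseudoinverse imply that $DG(\y_0)^\dagger$ vanishes on $(\mathrm{image}\, DG(\y_0))^\perp$. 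Therefore the operator norm of $DG(\y_0)^\dagger$, viewed as a map $\mathbb{R}^n \to T_{\y_0}\calY$, is attained on $\mathrm{image}\, DG(\y_0) = T_{\x_0}\calX$, yielding $\norm{DG(\y_0)^\dagger \circ \iota} = \norm{DG(\y_0)^\dagger}$. The main subtlety to handle carefully is precisely this domain bookkeeping: $\pdv{\x} F(\x_0,\y_0)$ is defined only on the subspace $T_{\x_0}\calX \subset \mathbb{R}^n$, and the fact that this subspace coincides with $\mathrm{image}\,\pdv{\y}F(\x_0,\y_0)$ is exactly what makes the composition with the pseudoinverse recover the full operator norm $\norm{DG(\y_0)^\dagger}$.
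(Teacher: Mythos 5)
Your route is the same as the paper's: the corollary is obtained by specialising \cref{thm: formula underdetermined condition number}, and the substantive content is exactly the domain bookkeeping you carry out at the end. That part is correct: $\pdv{\x}F(\x_0,\y_0)$ is minus the inclusion $\iota$ of $T_{\x_0}\calX=\mathrm{image}\,DG(\y_0)$ into $\mathbb{R}^n$, and since $DG(\y_0)^\dagger$ annihilates $\bigl(\mathrm{image}\,DG(\y_0)\bigr)^\perp$, restricting its domain to $T_{\x_0}\calX$ does not change its operator norm, whence $\norm{DG(\y_0)^\dagger\,\iota}=\norm{DG(\y_0)^\dagger}$.

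There is, however, a false step in your verification that $F$ is an FCRE. You conclude $\rank DF(\x,\y)=r$ for \emph{all} $(\x,\y)\in\calX\times\widehat{\calY}$ from the identity $T_\x\calX=\mathrm{image}\,DG(\y)$, but you established that identity only for pairs on the graph, i.e.\ when $\x=G(\y)$. Off the graph, $\mathrm{image}\,DF(\x,\y)=\mathrm{image}\,DG(\y)+T_\x\calX$ is the sum of two generally \emph{distinct} $r$-dimensional subspaces of $\mathbb{R}^n$ and can have dimension up to $2r$. Concretely, for $G(\y)=(\y,\y^2)$ one gets $\mathrm{image}\,DF\bigl((a,a^2),\y\bigr)=\operatorname{span}\{(1,2a)\}+\operatorname{span}\{(1,2\y)\}=\mathbb{R}^2$ whenever $\y\neq a$, and no shrinking of the neighbourhoods removes such pairs. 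So condition (2) of \cref{def: FCRE}, which is stated for all of $\calX\times\calY$, genuinely fails whenever the image of $G$ is curved; the assertion cannot be proved because it is not literally true. To be fair, this is as much a defect of the corollary's statement (the paper offers no explicit proof) as of your argument: the objects that actually matter — the level sets $F_\x^{-1}(0)=G^{-1}(\x)$, the least-squares solution map, and the derivative data at $(\x_0,\y_0)$ — depend only on $G$ and are well behaved by the constant-rank theorem applied to $G$ alone, and the condition-number formula you derive is unaffected since it is evaluated at a point of the graph. But a careful write-up should either restrict the rank hypothesis on $DF$ to (a neighbourhood of the solution set within) the fibres where it is used, or replace $F$ by an equivalent local formulation that genuinely has constant rank, rather than asserting, as you do, that $\rank DF$ equals $r$ everywhere.
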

For a map $G$ satisfying the assumptions in \cref{prop: condition of inverse problems}, we call the equation $G(\y) - \x = 0$ a \emph{constant-rank inverse problem} and we write $\lscond^{inv}[G](\y_0) := \lscond[F^{-1}(c)](\x_0,\y_0)$.
The dependence on $x_0$ is not written explicitly since $x_0$ is determined by $y_0$.
Note that $\norm{DG(\y_0) ^\dagger}$ is the reciprocal of the smallest nonzero singular value of $DG(\y_0) $.

\subsection{Proof of \cref{thm: closest solution exists}}
\label{sec: main proofs}

The proof of \cref{thm: closest solution exists} is an application of standard concepts from differential geometry and numerical analysis. We will use the following lemma to parameterise the tangent space to the solution sets.

\begin{lemma}
    \label{lemma: vector fields kernel}
    Let $\calX, \calY, \mathcal Z$ be smooth manifolds of dimensions $m,n$ and $k$, respectively, and let $F: \calX \times \calY \to \mathcal Z$ be a smooth map. Suppose that $\rank \frac{\partial}{\partial y} F(\x,\y) = r$ is constant. In a neighbourhood of any point $(\x_0,\y_0) \in \calX \times \calY$, there exists a linearly independent tuple of smooth vector fields $((0,E_1(\x,\y)),\dots,(0,E_{n - r}(\x,\y)))$ over $\calX \times \calY$ whose span is $\{0\} \times \ker \pdv{y}F(\x,\y)$.
\end{lemma}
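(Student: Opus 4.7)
The statement is purely local, so the plan is to reduce to a question about a smooth family of matrices of constant rank and then exhibit a basis of its kernel by an explicit formula.

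First I would introduce coordinate charts around $\x_0 \in \calX$, $\y_0 \in \calY$, and $F(\x_0,\y_0) \in \mathcal{Z}$. In those charts, $\pdv{\y} F(\x,\y)$ is represented by a smooth matrix-valued map $A: U \to \mathbb{R}^{k \times n}$ on a neighbourhood $U$ of $(\x_0,\y_0)$, with $\rank A(\x,\y) = r$ throughout. The task reduces to producing $n-r$ smooth maps $\widetilde{E}_1,\dots,\widetilde{E}_{n-r}: U \to \mathbb{R}^n$ that are pointwise linearly independent and span $\ker A(\x,\y)$; lifting them back to $\calY$ and pairing with the zero vector field on $\calX$ then yields the desired fields $(0,E_i(\x,\y))$ on $\calX \times \calY$.

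To produce the $\widetilde{E}_i$, I would use a Schur-complement construction. Since $A(\x_0,\y_0)$ has rank $r$, some $r \times r$ minor is non-zero, and by continuity of the determinant this minor stays non-zero on a possibly smaller neighbourhood $U' \subseteq U$. After permuting coordinates (a smooth change of chart), write
\[
A(\x,\y) = \begin{pmatrix} A_{11}(\x,\y) & A_{12}(\x,\y) \\ A_{21}(\x,\y) & A_{22}(\x,\y) \end{pmatrix},
\]
with $A_{11}$ the invertible $r \times r$ block. The constant-rank hypothesis forces the Schur complement $A_{22} - A_{21} A_{11}^{-1} A_{12}$ to vanish identically on $U'$: otherwise row/column operations would reveal an $(r{+}1) \times (r{+}1)$ non-zero minor. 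Consequently the columns of the $n \times (n-r)$ matrix
\[
K(\x,\y) := \begin{pmatrix} -A_{11}(\x,\y)^{-1} A_{12}(\x,\y) \\ \mathds{I}_{n-r} \end{pmatrix}
\]
lie in $\ker A(\x,\y)$; they are pointwise linearly independent because of the identity block; and, since $\ker A(\x,\y)$ has dimension $n - r$ by the rank-nullity theorem, they span it. Smoothness of $K$ follows from smoothness of $A$ and of matrix inversion on $\mathrm{GL}_r(\mathbb{R})$. Taking $\widetilde{E}_i(\x,\y)$ to be the $i$th column of $K(\x,\y)$ completes the construction.

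The only real obstacle is justifying that the Schur complement vanishes globally on the neighbourhood; this follows from the constant-rank hypothesis together with the standard fact that the rank of a block matrix with an invertible upper-left block equals $r$ plus the rank of the Schur complement. Everything else is bookkeeping: undoing the coordinate permutation and the choice of charts turns the $\widetilde{E}_i$ into smooth vector fields $E_i$ on a neighbourhood of $\y_0$ in $\calY$ (parameterised by $\x$), and prepending a zero in the $\calX$ factor yields the required vector fields on $\calX \times \calY$.
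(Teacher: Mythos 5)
Your proposal is correct, but it takes a different route from the paper. The paper applies the constant rank theorem to the augmented map $\tilde{F}(\x,\y) = (\x, F(\x,\y))$, whose differential has constant rank $m+r$ and whose kernel is exactly $\{0\} \times \ker \pdv{\y}F(\x,\y)$; in the resulting canonical coordinates $\tilde F$ is a standard projection, and the last $n-r$ coordinate vector fields $\pdv{u^i}$ furnish the desired frame directly. You instead work at the level of the matrix $A(\x,\y)$ representing $\pdv{\y}F$ in charts, fix an $r\times r$ minor that is invertible at the base point (hence on a neighbourhood), observe that the constant-rank hypothesis forces the Schur complement $A_{22} - A_{21}A_{11}^{-1}A_{12}$ to vanish identically, and read off the explicit smooth kernel frame $\bigl(-A_{11}^{-1}A_{12};\ \mathds{I}_{n-r}\bigr)$. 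These are the two standard proofs that the kernel of a constant-rank bundle map is a smooth subbundle. Your argument is more elementary and self-contained (pure linear algebra plus smoothness of matrix inversion) and yields an explicit formula for the frame; the paper's argument is shorter given the constant rank theorem and produces the frame as genuine coordinate vector fields, which matches the coordinate setup reused in the proof of the main existence theorem. One point worth making explicit if you wrote this up: the block decomposition is valid only after a permutation of the $\y$-coordinates and possibly of the components of $F$ in the chart on $\mathcal Z$, and the permutation may depend on the base point $(\x_0,\y_0)$ --- but since the lemma is local, fixing one permutation that works at $(\x_0,\y_0)$ and shrinking the neighbourhood is enough, as you note.
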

\begin{proof}
    Consider the map 
    \begin{linenomath*}
    \begin{align*}
        \tilde{F}:\, \calX \times \calY &\to \calX \times \mathcal Z\\
        (\x,\y) &\mapsto (\x, F(\x,\y)).
    \end{align*}
    \end{linenomath*}
    Then, $(\dot{\x},\dot{\y})\in \ker D\tilde{F}(\x,\y)$ if and only if $\dot{\x}=0$ and $(\dot\x,\dot\y) \in \ker D F(\x,\y)$. Since $D F(x,y)[\dot\x,\dot\y] = \frac{\partial}{\partial\x} F(x,y)[\dot\x] + \frac{\partial}{\partial\y} F(x,y)[\dot\y]$, $\dot\x=0$, and $\frac{\partial}{\partial\x}F(x,y) : T_x \calX \to T_{F(x,y)}\mathcal{Z}$ is a linear map, we have 
    \begin{equation}
        \label{eq: kernel extended F}
    \ker D\tilde{F}(\x,\y) = \{0\} \times \ker \frac{\partial}{\partial y} F(\x,\y) 
    \end{equation} and $\rank D\tilde{F}(\x,\y) = m + r$ for all $\x,\y$.

    By the constant rank theorem \cite[Theorem 4.12]{Lee2013}, there exist charts for the domain and codomain of $\tilde{F}$ in which $\tilde{F}$ is represented as 
    \begin{linenomath*}
    \[
    (u^1,\dots,u^{m + n}) \mapsto (u^1,\dots,u^{m+r},0,\dots,0)
    .\] 
    \end{linenomath*}
    In these coordinates, the basis $\mathcal{B} := \left\{ \pdv{u^i} \right\}_{i=m + r + 1}^{m + n}$ spans $\ker D\tilde{F}(\x,\y)$. By \cref{eq: kernel extended F}, we may write these $\pdv{u^i}$ as smooth vector fields $(0, E_{i - m - r}(\x,\y))$, where $E_{i-m-r}(\x,\y) \in \ker \pdv{y}F(\x,\y)$. 
\end{proof}

Now we can prove the existence of the canonical solution map.
\begin{proof}[Proof of \cref{thm: closest solution exists}]
    Let $n = \dim \calY$ and let $g_{\calY}$ be the Riemannian metric on $\calY$.
    Let $\{E_i(\x,\y)\}_{i=1}^{n - r}$ be the vector fields from \cref{lemma: vector fields kernel}. 

    By the constant rank theorem \cite[Theorem 4.12]{Lee2013}, there exists a neighbourhood $\mathcal U \subseteq \calX \times \calY$ of $(\x_0, \y_0)$ and a chart $\phi_{\mathcal Z}: \mathcal Z \to \mathbb{R}^{\dim \mathcal Z}$ such that $\phi_{\mathcal Z}(c) = 0$ and $\phi_{\mathcal Z}(F(\x,\y)) = (\widehat{F}(\x,\y), 0)$ for some smooth map $\widehat{F}: \mathcal U \to \mathbb{R}^r$. Thus, in this neighbourhood, the equation $F(\x,\y) = c$ is equivalent to $\widehat{F}(\x,\y) = 0$.

    Let $\log_\y: \calY \to T_{\y}\calY$ be the inverse of the exponential map in $\calY$. Informally, $\log_y y_0$ is the vector in $T_y \calY$ that ``points towards'' $\y_0$. Define $\phi_i(\x,\y) := g_{\calY}(E_i(\x,\y), \log_\y \y_0)$ and consider the system of $n$ equations
    \begin{equation}
        \label{eq: critical point system}
    \Phi(\x,\y)
        :=
    (\widehat{F}(\x,\y),\,\phi_1(\x,\y),\dots, \phi_{n-r}(\x,\y))
    =
    (0,0,\dots,0)
    .\end{equation}
    The last $n-r$ equations specify that $\log_\y \y_0$ is orthogonal to $\ker \pdv{\y} F(\x,\y)$ and thus normal to $F_\x^{-1}(c)$.
    We will show, using the implicit function theorem, that \cref{eq: critical point system} has a locally unique solution.
    
    Let $g_{\calX \times \calY}$ be the product metric in $\calX \times \calY$. Then
    \begin{linenomath*}
    \[
        \phi_i(\x,\y) = g_{\calX \times \calY}\left((0, E_i(\x,\y)), (0, \log_\y \y_0)\right)
    .\]
    \end{linenomath*}
    Let $(\xi, \eta) \in T_{(\x_0, \y_0)} (\calX \times \calY)$ be any tangent vector and let $\nabla$ be the Levi-Civita connection for $g_{\calX \times \calY}$. We calculate $D\phi_i(\x_0, \y_0)$ using the product rule:
    \begin{linenomath*}
    \begin{multline}
        \label{eq: Dphi_i}
        D\phi_i(\x_0,\y_0)[\xi, \eta] = g_{\calX \times \calY}\left(\nabla_{(\xi,\eta)} (0, E_i(\x,\y)), (0, \log_{\y_0} \y_0)\right)  \\
            + g_{\calX \times \calY}\left((0, E_i(\x_0,\y_0)), \nabla_{(\xi,\eta)} (0, \log_y y_0)\right)
    .\end{multline}
    \end{linenomath*}
    The first term vanishes because $\log_{y_0} y_0 = 0$. 
    The second term simplifies to $g_{\calY}(E_i(\x_0, \y_0), \nabla_\eta \log_\y \y_0)$. Using normal coordinates centred at $\y_0$, the vector field $\log_\y \y_0$ can be written as $\log_\y \y_0 = - \sum_{i=1}^{n} y^i \pdv{y^i}$, so that $\nabla_\eta \log_\y \y_0 = -\eta$ \cite[Proposition 5.24]{Lee2013}. Hence, \cref{eq: Dphi_i} is equal to $-g_{\calY}(E_i(\x_0, \y_0), \eta)$.

    To apply the implicit function theorem to \cref{eq: critical point system}, we verify that $\pdv{\y} \Phi(\x_0,\y_0)$ is invertible. It suffices to show that the kernel of $\pdv{\y} \Phi(\x_0,\y_0)$ is trivial. If a vector $\dot{\y} \in T_{\y_0}\calY$ is such that $\pdv{\y}\Phi(\x_0,\y_0)[\dot{\y}] = 0$, then $\dot{\y} \in \ker \pdv{\y}\widehat{F}(\x_0,\y_0) = \ker \pdv{\y} F(\x_0,\y_0)$. Furthermore, if $\pdv{\y}\phi_i(\x_0, \y_0)[\dot\y] = 0$ for all $i$, then $\dot\y$ is orthogonal to $E_i(\x_0, \y_0)$ for all $i$. By the definition of $E_i$, it follows that $\dot\y \perp \ker \pdv{\y} F(\x_0, \y_0)$ and thus $\dot\y = 0$. Therefore, $\ker \pdv{\y} \Phi(\x_0,\y_0) = \{0\}$.
    By the implicit function theorem \cite[Theorem C.40]{Lee2013}, there exists a neighbourhood $\widehat{\calX} \times \widehat{\calY}$ of $(\x_0, \y_0)$ and a smooth function $H_{\y_0}$ such that $\Phi(\x, \y) = 0$ for $(\x, \y) \in \widehat{\calX} \times \widehat{\calY}$ if and only if $\y = H_{\y_0}(\x)$.

    Next, we show that $H_{\y_0}(\x)$ is the map from the theorem statement. Consider a variable point $\x \in \widehat{\calX}$. By continuity of $H_{\y_0}$, if $\x$ is sufficiently close to $\x_0$, then $H_{y_0}(\x)$ lies in the interior of some compact geodesic ball $\overline{B} \subseteq \widehat{\calY}$ of radius $\rho$ around $\y_0$. Since the level set $F_\x^{-1}(c)$ is properly embedded \cite[Theorem 4.12]{Lee2013}, the minimum of $d_{\y_0}(\y) := d_{\mathcal Y}(\y_0, \y)$ over all $\y \in F_\x^{-1}(c) \cap \overline{B}$ is attained. The interior of $F_\x^{-1}(c) \cap \overline{B}$ contains at least $H_{\y_0}(\x)$ and, since $d(\y_0, H_{\y_0}(\x)) < \rho$, it follows that $d_{\y_0}$ attains a minimum in the interior of $F_\x^{-1}(c) \cap \overline{B}$. Thus, at the minimiser $\y_\star$, we must have 
    \begin{linenomath*}
    \[
    \mathrm{grad} \, d_{\y_0}^2(\y) \perp T_{\y_\star} F_{\x}^{-1}(c) = \ker \pdv{\y}F(\x,\y_\star),
    \]
    \end{linenomath*}
    where $\mathrm{grad}\, d_{\y_0}^2(\y) = -2 \log_y \y_0$. As we established above, $H_{\y_0}(\x)$ is the unique point that solves \cref{eq: critical point system}. In other words, it is the only $\y \in F_\x^{-1}(c) \cap \widehat{\calY}$ such that $\log_\y \y_0 \perp \ker \pdv{\y} F(\x, \y)$. Thus, $H_{\y_0}(\x) = \y_\star$, as required.

We obtain an expression for $DH_{\y_0}(\x_0)$ as follows. Since $\Phi(\x, H_{\y_0}(\x))$ is constant for all $\x$, it follows by implicit differentiation that
\[
\pdv{\x} \Phi(\x_0, \y_0) +
\pdv{\y} \Phi(\x_0, \y_0) DH_{\y_0}(\x_0) = 0
.\]
By substituting the partial derivatives of $\Phi$ obtained in the proof, we get
\begin{linenomath*} 
\[
\begin{cases}
    \pdv{\x}F(\x_0,\y_0) + \pdv{\y} F(\x_0,\y_0) DH_{\y_0}(\x_0) &= 0, \\
    E_i^*(\x_0,\y_0) DH_{\y_0}(\x_0) &= 0 \quad\text{for all}\quad i=1,\dots,n-r,
\end{cases}
\]
\end{linenomath*}
where $\cdot^*$ is the dual (or adjoint). In other words, $DH_{\y_0}(\x_0)$ is the unique matrix that solves $\pdv{\y} F(\x_0,\y_0) DH_{\y_0}(\x_0) = - \pdv{\x}F(\x_0,\y_0)$ and has a column space orthogonal to $\ker \pdv{\y} F(\x_0,\y_0)$.
Hence, $DH_{\y_0}(\x_0) = -\left(\pdv{\y} F(\x_0,\y_0)\right)^\dagger \pdv{\x}F(\x_0,\y_0)$.
\end{proof}

\section{Problems invariant under orthogonal symmetries}
\label{sec: orth symmetries}

A notable advantage of the latent condition number of an FCRE $F(\x,\y) = c$ is that it only requires information about the local behaviour of $F$ around a particular solution $(\x_0, \y_0)$. In particular, it does not require an explicit parametrisation of all solutions in terms of $(\x_0, \y_0)$. By contrast, for some underdetermined systems studied in the literature, the derivation of their condition number relies on the solutions being unique up to a known equivalence relation, as in \cite[Section 14.3]{Burgisser2013} and \cite{Vannieuwenhoven2017}.

When it \emph{is} known that the system is invariant under certain symmetries, however, more can be said about the condition numsber.
For instance, since the condition number generally depends on the solution $\y$ and the parameter $\x$, it is natural to ask when it depends on the parameter alone. That is, when do two distinct $\y_1, \y_2$ that solve $F(\x,\y) = c$ for the same $\x$ satisfy $\lscond[F^{-1}(c)](\x, \y_1) = \lscond[F^{-1}(c)](\x, \y_2)$? An obvious sufficient condition for this is that both $F$ and its solutions are invariant under some family of isometries. This is captured by the following statement.

\begin{proposition}
    \label{prop: condition invariant under isometry}
    Let $F(\x,\y) = c$ be an FCRE with $F: \calX \times \calY \to \mathcal Z$. Let $\psi: \calY \to \calY$ be an isometry such that $F \circ (\mathrm{Id} \times \psi) = F$. For any $\x,\y \in \calX \times \calY$, we have 
    \begin{linenomath*}
    \[
    \lscond[F^{-1}(c)](\x,\y) = \lscond[F^{-1}(c)](\x, \psi(\y))
    .\]
    \end{linenomath*}
\end{proposition}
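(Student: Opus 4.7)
The plan is to exploit the solution map formulation in \cref{thm: closest solution exists} rather than the explicit formula in \cref{thm: formula underdetermined condition number}, as the symmetry hypothesis is most naturally expressed in terms of minimisation over the level sets.

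First I would show that the level set $F_\x^{-1}(c)$ is invariant under $\psi$ for every $\x$. The hypothesis $F \circ (\mathrm{Id} \times \psi) = F$ implies $F(\x, \psi(\y)) = F(\x, \y)$, so $\y \in F_\x^{-1}(c)$ if and only if $\psi(\y) \in F_\x^{-1}(c)$. Since $\psi$ is an isometry it is in particular a diffeomorphism, so $\psi^{-1}$ exists and the same identity holds for it (applied to a point of the form $\psi(\y')$). This gives $\psi(F_\x^{-1}(c)) = F_\x^{-1}(c)$.

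Next I would argue that the least-squares solution map transforms equivariantly, i.e., on a neighbourhood of $\x_0$ one has
\[
H_{\psi(\y_0)}(\x) = \psi\bigl(H_{\y_0}(\x)\bigr).
\]
Indeed, if $\y_\star = H_{\y_0}(\x)$ minimises $\y \mapsto d_{\calY}(\y_0, \y)$ over $\y \in F_\x^{-1}(c)$, then for any $\y \in F_\x^{-1}(c)$ we have $\psi^{-1}(\y) \in F_\x^{-1}(c)$ and, by the isometry property,
\[
d_{\calY}(\psi(\y_0), \y) = d_{\calY}(\y_0, \psi^{-1}(\y)) \ge d_{\calY}(\y_0, \y_\star) = d_{\calY}(\psi(\y_0), \psi(\y_\star)).
\]
Thus $\psi(\y_\star)$ minimises the distance to $\psi(\y_0)$ over $F_\x^{-1}(c)$, and by uniqueness of the least-squares solution map (\cref{thm: closest solution exists}) it equals $H_{\psi(\y_0)}(\x)$.

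Finally I would differentiate this identity at $\x = \x_0$ and apply Rice's theorem. Since $H_{\y_0}(\x_0) = \y_0$, the chain rule gives $DH_{\psi(\y_0)}(\x_0) = D\psi(\y_0) \circ DH_{\y_0}(\x_0)$. Because $\psi$ is a Riemannian isometry, its differential $D\psi(\y_0): T_{\y_0}\calY \to T_{\psi(\y_0)}\calY$ is a linear isometry between inner product spaces, so composition with it preserves the spectral norm. Hence $\norm{DH_{\psi(\y_0)}(\x_0)} = \norm{DH_{\y_0}(\x_0)}$, and the conclusion follows from $\kappa_{LS}[F](\x_0,\y_0) = \kappa[H_{\y_0}](\x_0) = \norm{DH_{\y_0}(\x_0)}$.

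The only mild obstacle is a bookkeeping issue: one must check that the neighbourhood $\widehat{\calX}$ on which $H_{\psi(\y_0)}$ is defined can be taken to coincide (or shrunk to coincide) with the one for $H_{\y_0}$. This is immediate since $\psi$ is a diffeomorphism, so it sends any neighbourhood of $\y_0$ on which the proof of \cref{thm: closest solution exists} produces $H_{\y_0}$ to a corresponding neighbourhood of $\psi(\y_0)$, and the critical-point equation \cref{eq: critical point system} transforms accordingly.
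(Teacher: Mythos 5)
Your proof is correct, but it takes a genuinely different route from the paper. The paper's proof is purely algebraic: it differentiates the identity $F \circ (\mathrm{Id} \times \psi) = F$ to get $\pdv{\y} F(\x,\y) = \pdv{\y} F(\x, \psi(\y))\, D\psi(\y)$ and $\pdv{\x} F(\x,\y) = \pdv{\x} F(\x, \psi(\y))$, then plugs these into the Moore--Penrose formula of \cref{thm: formula underdetermined condition number}, using that right-composition with the orthogonal map $D\psi(\y)$ neither changes the pseudoinverse structure (since $(AQ)^\dagger = Q^{-1}A^\dagger$ for orthogonal $Q$) nor the spectral norm. You instead work one level up, at the level of the solution maps themselves: you establish the equivariance $H_{\psi(\y_0)} = \psi \circ H_{\y_0}$ from the invariance of the level sets and the isometry property of $\psi$, then differentiate and invoke Rice's theorem. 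Your argument is slightly longer and requires the bookkeeping about neighbourhoods and the local uniqueness of the minimiser from the proof of \cref{thm: closest solution exists} (your appeal to ``uniqueness'' is really local uniqueness within a geodesic ball, which transfers correctly because $\psi$ maps geodesic balls about $\y_0$ to geodesic balls of the same radius about $\psi(\y_0)$), but it buys a strictly stronger intermediate result --- the equivariance of the least-squares solution map under the symmetry, not merely the equality of condition numbers --- and it avoids any computation with pseudoinverses. Both proofs are valid; the paper's is the more economical given that \cref{thm: formula underdetermined condition number} is already available.
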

\begin{proof}
    Compute
    \begin{linenomath*}
    \[
    DF(\x,\y) = D(F \circ (\mathrm{Id} \times \psi))(\x,\y) = DF(\x, \psi(\y)) (\mathrm{Id} \times D\psi(\y))
    \]
    \end{linenomath*}
    so that $\pdv{\y} F(\x,\y) = \pdv{\y} F(\x, \psi(\y)) D\psi(\y)$ and $\pdv{\x} F(\x,\y) = \pdv{\x} F(\x, \psi(\y))$.
    Since $D\psi(\y)$ is an orthogonal matrix, applying \cref{thm: closest solution exists} gives the desired result.
\end{proof}
This proposition is useful when the solutions are determined up to certain isometries.
That is, suppose that $\x \in \calX$ is any point and $\{\psi_i\}_{i \in I}$ is a family of isometries such that $F = F \circ (\mathrm{Id} \times \psi_i)$ for all $i$. If, for every $\y_1, \y_2$ where $F(\x, \y_1) = F(\x, \y_2) = c$, there exists an $i \in I$ such that $\y_1 = \psi_i(\y_2)$, then the above implies that all solutions of $F(\x,\y) = c$ have the same condition number.

For several problems in numerical linear algebra, the solutions are unique up to multiplication by an orthogonal matrix, i.e., a linear isometry. Thus, their condition number depends only on the input by \cref{prop: condition invariant under isometry}. Some examples include:
\begin{enumerate}
    \item \textit{Positive-semidefinite matrix factorisation}: $\calX = (\mathbb{S}^{n \times n}_k)^+$ is the set of symmetric positive semidefinite matrices of rank $k$ and $\calY = \mathbb{R}^{n \times k}_k$. A symmetric factorisation of $\X \in \calX$ is a solution of $F(\X,\Y) = 0$, where $F(\X,\Y) := \X - \Y\Y^T$. 
    \item \label{itm: kernel computation} \textit{Computation of an orthonormal basis of the kernel}: $\calX = \mathbb{R}^{m \times n}_k$ and $\calY = \mathrm{St}(n,n-k)$ and $F(\X,\Y) = 0$, where $F(\X,\Y) := \X\Y$. 
    \item \textit{Computation of an orthonormal basis of the column space}: if $\calX = \mathbb{R}^{m \times n}_k$  $\calY = \mathrm{St}(m, k)$, then $\Y \in \calY$ is a basis of $\operatorname{span} \X$ for some $\X \in \calX$ if and only if $F(\X,\Y) := (\mathds{I}_m - \X\X^\dagger)Y = 0$. 
    \item \textit{Orthogonal Tucker decomposition}: see \cref{sec: Tucker}.
\end{enumerate}
In the first three examples, the solution $Y$ is unique up to the isometries $\psi: \Y \mapsto \Y Q$, where $Q$ is any orthogonal matrix in $O(k)$.

\subsection{Comparison to the quotient-based approach}
\label{sec: LS vs quotient}

If the solutions to a problem are invariant under a known symmetry group, they can be considered as uniquely defined points in a quotient space as opposed to a set of many solutions. For example, consider the problem of computing the eigenvector corresponding to a given simple eigenvalue of a matrix if $A \in \mathbb{C}^{n \times n}$. Depending on the precise formulation of the problem, the solution can either be considered a set of points in $\mathbb{C}^n$ or as a unique point in projective space.

For some underdetermined problems, a notion of condition has been worked out by quotienting out symmetry group of the solution set \cite[Section 14.3]{Burgisser2013}. The fundamentals of this technique are recapped below.
In the remainder of this section, we investigate whether the condition number arising from this method agrees with the latent condition number. 

Suppose that $F(\x,\y) = c$ is an FCRE and that there exists an equivalence relation $\sim$ so that $F(\x, \y) = F(\x, \y')$ for all $\x$ if and only if $\y \sim \y'$. If $\pi: \calY \to \calY / {\sim},\, \y \mapsto [\y]$ is the projection of a point onto its equivalence class, there exists a unique map $\widetilde{F}$ such that the following diagram commutes.
\begin{equation}
    \label{eq: cd quotient map}
    \includegraphics{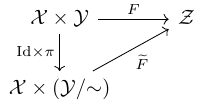}
\end{equation}
Under certain conditions, the projection map $\pi$ and the metric in $\calY$ induce a Riemannian structure on $\calY / {\sim}$.
That is, the differential $D\pi$ is a formal linear map such that every smooth function $G \circ \pi$ where $G$ is of the form $\calY/{\sim} \to \mathcal Z$ obeys the chain rule (as in \cite[Theorem 4.29]{Lee2013}) and the restriction of $D\pi$ to the orthogonal complement of its kernel is a linear isometry. In this case, $\pi$ is called a \emph{Riemannian submersion}. For example, the orbits of certain groups acting isometrically on $\calX$ form a Riemannian manifold such that the quotient projection is a Riemannian submersion \cite[Theorem 2.28]{Lee2018}.

Riemannian submersions give an alternative perspective on the system $F(\x,\y) = c$: it can be formulated equivalently as $\widetilde{F}(\x, [\y]) = c$ where the goal is to solve for a representative of $[\y]$. For this equation, the condition number at a point $(\x_0, [\y_0])$ is given by \cite[Section 14.3]{Burgisser2013}:
\begin{equation}
    \label{eq: cond quotient manifold}
\kappa[\widetilde{F}](\x_0, [\y_0])
=
\norm{\left(\pdv{[\y]} \widetilde{F}(\x_0, [\y_0])\right)^{-1} \pdv{\x}\widetilde{F}(\x_0, [\y_0])}
.\end{equation}
This can be pulled back to a more concrete expression over the original domain $\calX \times \calY$, so that the derivative over the quotient space is not explicitly needed.
Because of the way the metric in $\calY / {\sim}$ is defined, the above turns out to be equal to latent condition number by the following proposition.

\begin{proposition}
    \label{prop: LS is equivalent to quotient}
    Let $F(\x,\y) = c$ be an FCRE, where $F: \calX \times \calY \to \mathcal Z$ is smooth. Let $\pi: \calY \to \calY/{\sim},\, \y \mapsto [\y]$ be a Riemannian submersion such that \cref{eq: cd quotient map} commutes. 
	Assume that $\ker \pdv{\y} F(\x,\y) = \ker D\pi(\y)$
	and that $\pdv{[\y]} \widetilde{F}(\x, [\y])$ is invertible.
	Then, at every $(\x_0,\y_0) \in \calX \times \calY$, we have 
	\begin{linenomath*}
    \[
	\lscond[F^{-1}(c)](\x_0, \y_0) = \kappa[\widetilde{F}](\x_0,[\y_0])
	\]
    \end{linenomath*}
    where the right-hand side is given by \cref{eq: cond quotient manifold}.
\end{proposition}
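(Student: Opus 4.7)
The plan is to apply \cref{thm: formula underdetermined condition number} to the left-hand side, evaluate \cref{eq: cond quotient manifold} for the right-hand side, and then show by a direct operator-norm computation that the two coincide. The first step is to differentiate the relation $F = \widetilde{F} \circ (\mathrm{Id} \times \pi)$ via the chain rule, yielding
\[
\pdv{\x} F(\x_0, \y_0) = \pdv{\x} \widetilde{F}(\x_0, [\y_0])
\quad\text{and}\quad
\pdv{\y} F(\x_0, \y_0) = \pdv{[\y]} \widetilde{F}(\x_0, [\y_0]) \circ D\pi(\y_0).
\]
Abbreviating $A := \pdv{\y} F(\x_0, \y_0)$, $B := \pdv{[\y]} \widetilde{F}(\x_0, [\y_0])$, $C := D\pi(\y_0)$, and $P := \pdv{\x} F(\x_0, \y_0)$, we then have $A = B C$, where $B$ is invertible and $\ker A = \ker C$ by the hypotheses of the proposition.

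The core of the argument combines a minimum-norm characterisation of the Moore--Penrose inverse with the Riemannian submersion property. Since $C$ is surjective and $B$ is invertible, $A$ is surjective; hence $A^\dagger w$ is the unique vector in $(\ker A)^\perp = (\ker C)^\perp$ satisfying $A v = w$, or equivalently the unique such $v$ with $C v = B^{-1} w$. The Riemannian submersion hypothesis means precisely that $C$ restricted to $(\ker C)^\perp$ is a linear isometry onto $T_{[\y_0]}(\calY/{\sim})$, so this unique preimage has norm exactly $\norm{B^{-1} w}$. I would then conclude that $\norm{A^\dagger w} = \norm{B^{-1} w}$ for every $w$, and taking the supremum of $\norm{A^\dagger P u}/\norm{u}$ over unit vectors $u$ gives the operator-norm identity $\norm{A^\dagger P} = \norm{B^{-1} P}$. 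Combined with \cref{thm: formula underdetermined condition number} and \cref{eq: cond quotient manifold}, this delivers the claimed equality.

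I do not expect a substantial obstacle. The one subtlety worth flagging is that the naive identity $(BC)^\dagger = C^\dagger B^{-1}$ does not hold for invertible but non-orthogonal $B$, so I would avoid invoking it directly and instead extract the norm equality $\norm{A^\dagger w} = \norm{B^{-1} w}$ from the minimum-norm characterisation above. Everything else is a routine unpacking of the definition of a Riemannian submersion.
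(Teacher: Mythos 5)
Your proposal is correct and follows essentially the same route as the paper: both use the chain rule on $F = \widetilde{F}\circ(\mathrm{Id}\times\pi)$, the fact that the Moore--Penrose inverse of a surjective map inverts its restriction to $(\ker)^\perp$, and the isometry of $D\pi(\y_0)$ on $(\ker D\pi(\y_0))^\perp$ to identify $\bigl(\pdv{\y}F\bigr)^\dagger$ with $\bigl(\pdv{[\y]}\widetilde{F}\bigr)^{-1}$ before invoking \cref{thm: formula underdetermined condition number}. Your explicit pointwise norm identity $\norm{A^\dagger w} = \norm{B^{-1}w}$, and your caution about the false identity $(BC)^\dagger = C^\dagger B^{-1}$, merely spell out the identification the paper states more compactly.
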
%
\begin{proof}
    Define $H_0 := (\ker D\pi(\y_0))^\perp = \left(\ker \pdv{\y} F(\x_0,\y_0)\right)^\perp$.
	By the definition of a Riemannian submersion, $H_0$ is isometric to $T_{[\y_0]} (\calY / {\sim})$.
	Thus, we may write 
	\begin{linenomath*}
    \[
	\pdv{[\y]} \widetilde{F}(\x_0, [\y_0]): 
	H_0
	\to 
	T_{F(\x_0,\y_0)} \mathcal Z,
	\]
    \end{linenomath*}
	so that $\pdv{[\y]} \widetilde{F}(\x, [\y]) = \pdv{\y} F(\x_0,\y_0)\bigr\vert_{H_0}$.
	If $A$ is a surjective linear map, then $A^\dagger$ is the inverse of the restriction of $A$ to $(\ker A)^\perp$. Thus, 
	\begin{linenomath*}
        \[
	\paren{ \pdv{\y} F(\x_0,\y_0) }^\dagger = \paren{\pdv{[\y]} \widetilde{F}(\x_0,[\y_0])}^{-1}
	\]
    \end{linenomath*}
	with the identification $H_0 \cong T_{[\y_0]}(\calY / {\sim})$.
	In addition, $\pdv{\x} F(\x_0,\y_0) = \pdv{\x} \widetilde{F}(\x_0, [\y_0])$. Combining this with \cref{thm: closest solution exists} gives the desired result.
\end{proof}

This proposition adds a new interpretation to \cref{eq: cond quotient manifold}: \emph{the solution map $\calX \to \calY / {\sim}$ has the same condition number as the canonical solution map $\calX \to \calY$}. 
The main advantage of this is that our approach does not require an explicit equivalence relation up to which the solution is defined. That is, one only needs to know that the problem is an FCRE. Moreover, the latent condition number applies to more general problems, as the quotient $\calY/{\sim}$ is not required to be a smooth manifold. Such situations can occur when attempting to quotient by a Lie group that does not act freely; this is exactly what happens when viewing tensor rank decomposition as the problem of recovering factor matrices up to permutation and scaling indeterminacies \cite{Vannieuwenhoven2017}.

One manifold to which \cref{prop: LS is equivalent to quotient} can be applied is the \emph{Grassmannian} of $n$-dimensional linear subspaces of $\mathbb{R}^m$, i.e., $\mathrm{St}(m, n) / O(n)$ where $O(n)$ is the orthogonal group. An equation $\tilde{F}(\x, [\y]) = c$ defining a point $[\y]$ on the Grassmannian can be thought of as an underdetermined system $F(\x,\y) = c$ with outputs on $\mathrm{St}(m, n)$. The condition number \cref{eq: cond quotient manifold} can be obtained by combining \cref{prop: LS is equivalent to quotient} and \cref{thm: closest solution exists}. The same conclusion holds for a general problem over the manifold of positive semidefinite $n \times n$ matrices of rank $k$, which is sometimes identified with $\mathbb{R}^{n \times k}_k / O(k)$ where $\Y_1 \sim \Y_2 \Leftrightarrow \Y_1\Y_1^T = \Y_2\Y_2^T$ \cite{journee_low-rank_2010}.

\section{Condition number of two-factor matrix decompositions}
\label{sec: 2fd}

One of the most basic examples of a FCRE is the factorisation of a matrix into two matrices of full rank. It is formally described as follows.

\begin{definition}\label{def_twofactor}
The \emph{rank-revealing two-factor matrix decomposition problem} at $X \in \mathbb{R}^{m \times n}_k$ is the inverse problem 
$G_\mathcal{M}(L, R) - X = 0$
where 
\[
G_\mathcal{M}: \overbrace{\mathbb{R}^{m \times k}_k \times \mathbb{R}^{k \times n}_k}^{\calY} \to \mathbb{R}^{m \times n},\,
(L,R) \to LR. 
\]
\end{definition}

\begin{proposition}
    \label{prop: cond two-factor decomposition}
    Let $G_\mathcal{M}(L,R) := LR$, where $L \in \mathbb{R}^{m \times k}$ and $R \in \mathbb{R}^{k \times n}$ have rank $k$. Let $\sigma_i(\cdot)$ denote the $i$th largest singular value of its argument if $i \le k$ and $\sigma_i(\cdot) := 0$ if $i > k$.
    At every point $(L,R)$, we have
    \begin{equation}
        \label{eq: cond 2fd}
    \lscond^{inv}[G_\mathcal{M}](L,R)
    =
    \frac{1}{\sqrt{\min\left\{ 
        \sigma_k(L)^2 + \sigma_n(R)^2
        ,\; 
        \sigma_m(L)^2 + \sigma_k(R)^2  
    \right\}}}
    \end{equation}
    with respect to the Euclidean inner product on $\mathbb{R}^{m\times n}$ and $\mathbb{R}^{m\times k}\times\mathbb{R}^{k\times n}$.
    If $k < \min\{m,n\}$, then $\lscond^{inv}[G_\mathcal{M}](L,R) = \min \{\sigma_k(L), \sigma_k(R)\}^{-1}$.
\end{proposition}
\begin{proof}
We will derive the condition number of this problem using \cref{prop: condition of inverse problems}. We can isometrically identify $\mathbb{R}^{m \times n} \cong \mathbb{R}^{mn}$ in the Euclidean distances on both spaces and analogously for $\mathbb{R}^{m \times k}$ and $\mathbb{R}^{k \times n}$. Then, 
\[
DG_\mathcal{M}(L,R)[\dot{L}, \dot{R}] = \dot{L} R + L \dot{R}
\cong
\underbrace{
[\mathds{I}_{m} \otimes R^T \quad L \otimes \mathds{I}_n]
}_{=: J}
\begin{bmatrix}
    \dot{L} \\
    \dot{R}
\end{bmatrix}
.\] 
It remains to compute the $r$th largest singular value of $J$, where $r = \rank(J)$. The singular values of $J$ are the square roots of the eigenvalues of $JJ^T = \mathds{I}_m \otimes (R^T R) + (LL^T) \otimes \mathds{I}_n$. This matrix is a Kronecker sum and its eigenvalues are $\lambda + \mu$ where $\lambda$ and $\mu$ run over all eigenvalues of $R^TR$ and $LL^T$, respectively \cite[Theorem 4.4.5]{horn_topics_2010}. Therefore, all singular values of $J$ are 
\begin{equation}
    \label{eq: svd of two factor decomposition}
\sigma(J) 
=
\left\{ 
\sqrt{\sigma_i(L)^2 + \sigma_j(R)^2}
\quad\middle\vert\quad 
1 \le i \le m,\,
1 \le j \le n
\right\}
.\end{equation}
The number of nonzero singular values of $J$ is thus constant for all $L$ and $R$ of rank $k$ (counted with multiplicity).

By \cref{prop: condition of inverse problems}, the condition number is the reciprocal of the smallest nonzero singular value of $J$. An element of \cref{eq: svd of two factor decomposition} is zero if and only if both $i > k$ and $j > k$. 
Thus, 
\begin{equation}
    \label{eq: 2fd smallest svd Jacobian}
\lscond^{inv}[G_\mathcal{M}](L,R) = \left(\min_{i \le k \text{ or } j \le k} \sqrt{\sigma_i(L)^2 + \sigma_j(R)^2} \right)^{-1}
.\end{equation}
Since the singular values are sorted in descending order, the minimum is attained when $i = k$ or $j = k$. If it is attained for $i = k$, the right-hand side of \cref{eq: 2fd smallest svd Jacobian} is $(\sigma_k(L)^2 + \sigma_n(R)^2)^{-1/2}$. Analogously, if the minimum is attained for $j = k$, we get $(\sigma_m(L)^2 + \sigma_k(R)^2)^{-1/2}$. This concludes the general case. The expression for the case where $k < \min\{m,n\}$ is obtained by substituting $\sigma_m(L) = \sigma_n(R) = 0$ in \cref{eq: cond 2fd}.
\end{proof}

Not all two-factor decompositions of a given matrix $\X \in \mathbb{R}^{m \times n}_k$ have the same condition number. Therefore, one may be interested in a decomposition whose condition number is as small as possible. In the context of tensor decompositions, the \emph{norm-balanced CPD} was introduced for the same purpose \cite{Vannieuwenhoven2017}.
Intuitively, one may expect to find an optimal two-factor decomposition by computing a singular value decomposition $\X = U \Sigma V^T$ and setting $L := U\Sigma^{1/2}$ and $R := \Sigma^{1/2} V^T$. This turns out to be correct, by the following lemma.

\begin{lemma}
    Suppose that $\X = LR$ with $L \in \mathbb{R}^{n \times k}_k$ and $R \in \mathbb{R}^{k \times n}_k$. Then $\min\left\{\sigma_k(L), \sigma_k(R)\right\} \le \sqrt{\sigma_k(\X)}$.
\end{lemma}
\begin{proof}
    Let $U$ and $V$ be matrices whose columns are orthonormal bases of $\operatorname{span} X$ and $\operatorname{span} X^T$, respectively. If we set $(\widehat{L}, \widehat{R}, \widehat{X}) := (U^T L, RV, U^TXV)$, then the $k\times k$ matrices $\widehat{L}, \widehat{R},$ and $\widehat{X}$ have the same $k$ largest singular values as $L, R$, and $X$, respectively.
    Suppose that $\sigma_k(\widehat{X}) = \norm{\widehat{X}v}$ for some unit vector $v \in \mathbb{R}^k$, then $\sigma_k(\widehat{X}) = \bigl\Vert \widehat{L}\widehat{R}v \bigr\Vert \ge \sigma_k(\widehat{L})\sigma_k(\widehat{R})$ by the Courant--Fisher theorem \cite[Theorem 3.1.2]{horn_topics_2010}. Hence, $\sigma_k(\widehat{L})$ and $\sigma_k(\widehat{R})$ cannot both be larger than $\sqrt{\sigma_k(X)}$.
\end{proof}
\begin{corollary}
    \label{cor: optimal condition number 2fd}
    Let ${\X_0} \in \mathbb{R}^{m \times n}_k$ be any matrix and let $G_\mathcal{M}$ be the map from \cref{prop: cond two-factor decomposition}. Then, the best latent condition number of computing a two-factor matrix factorisation is
    \[
    \min_{{L_0R_0 = \X_0}} \lscond^{inv}[G_\mathcal{M}]({L_0, R_0}) = \sigma_k({\X_0})^{-1/2}
    .\]
    If ${\X_0} = U\Sigma V^T$ is a compact singular value decomposition\footnote{We call a singular value decomposition \emph{compact} if $\Sigma \in \mathbb{R}^{k \times k}$ and $k = \rank X_0$.}, then the minimum is attained at ${(L_0,R_0)} = (U \Sigma^{1/2}, \Sigma^{1/2} V^T)$.
\end{corollary}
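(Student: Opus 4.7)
The plan is to derive the corollary as a direct consequence of \cref{prop: cond two-factor decomposition} and the lemma immediately preceding the corollary. Because the simplified expression in the proposition gives $\kappa_{LS}^{inv}[G_\mathcal{M}](L,R) = \min\{\sigma_k(L), \sigma_k(R)\}^{-1}$ in the rank-deficient regime $k < \min\{m,n\}$, the problem collapses to bounding $\min\{\sigma_k(L), \sigma_k(R)\}$ uniformly over all factorizations $X = LR$ and then exhibiting a factorization that attains the bound.

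First I would establish the lower bound $\kappa_{LS}^{inv}[G_\mathcal{M}](L,R) \ge \sigma_k(X)^{-1/2}$. The preceding lemma supplies exactly the estimate $\min\{\sigma_k(L), \sigma_k(R)\} \le \sqrt{\sigma_k(X)}$ for every valid factorization $LR = X$. Taking reciprocals and combining with the simplified formula from \cref{prop: cond two-factor decomposition} yields the claimed inequality.

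Next I would verify that the bound is attained at $(L, R) = (U \Sigma^{1/2}, \Sigma^{1/2} V^T)$. Because $U \in \mathrm{St}(m, k)$ and $V \in \mathrm{St}(n, k)$ have orthonormal columns in the compact SVD, left-multiplication by $U$ and right-multiplication by $V^T$ preserve singular values, so the nonzero singular values of $L$ and $R$ both coincide with those of $\Sigma^{1/2}$. In particular $\sigma_k(L) = \sigma_k(R) = \sqrt{\sigma_k(X)}$, and substituting into the formula yields $\kappa_{LS}^{inv}[G_\mathcal{M}](L, R) = \sigma_k(X)^{-1/2}$, matching the lower bound.

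No substantive obstacle is expected: the entire content of the argument lives in the preceding lemma, which is the only nontrivial ingredient, and the corollary is essentially a one-line consequence once both the formula and the lemma are in hand. The only routine verification is that the SVD-based pair belongs to the parameter space $\mathbb{R}^{m \times k}_k \times \mathbb{R}^{k \times n}_k$, which follows from $\Sigma$ having full rank $k$ because $X$ has rank exactly $k$.
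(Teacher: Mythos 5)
Your proof is correct and is exactly the argument the paper intends: the corollary is presented as an immediate consequence of the preceding lemma (which gives $\min\{\sigma_k(L),\sigma_k(R)\}\le\sqrt{\sigma_k(\X)}$ for every factorisation) together with the simplified formula $\kappa_{LS}^{inv}[G_\mathcal{M}](L,R)=\min\{\sigma_k(L),\sigma_k(R)\}^{-1}$ from \cref{prop: cond two-factor decomposition}, plus the observation that $U\Sigma^{1/2}$ and $\Sigma^{1/2}V^T$ each have $k$th singular value $\sqrt{\sigma_k(\X)}$. You are also right to flag that this uses the regime $k<\min\{m,n\}$ where the simplified formula holds; the paper makes the same implicit restriction.
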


\begin{remark}
    \Cref{cor: optimal condition number 2fd} should \emph{not} be interpreted as saying that the evaluation of the map $X \mapsto (U\Sigma^{1/2}, \Sigma^{1/2}V^T)$ is well-conditioned or that it refines the two-factor decomposition optimally in the sense defined in the introduction. This is clearly false, since the singular vectors of $X$ may not even be unique. Instead, \cref{cor: optimal condition number 2fd} says that, if one is interested in a two-factor decomposition $(L_0,R_0)$ of $X_0$ such that rank-preserving perturbations $X$ of $X_0$ have \emph{any} decomposition close to $(L_0, R_0)$, then $(U \Sigma^{1/2}, \Sigma^{1/2} V^T)$ is optimal.
\end{remark}

\Cref{cor: optimal condition number 2fd} connects the condition number to the distance from $\X \in \calX = {\mathbb{R}^{m \times n}_k}$ to the boundary {$\partial X$} of $\calX$. Since $\partial\calX$ is the set of $m \times n$ matrices of rank strictly less than $k$, the Eckart-Young theorem implies that $\min_{\widehat{\X} \in \partial\calX} \norm{\X - \widehat{\X}}_2 = \sigma_k(\X)$, which is the inverse square of the condition number in \cref{cor: optimal condition number 2fd}. Consequently, the \emph{ill-posed locus}, defined as the set of (limits of) inputs where the condition number diverges, is precisely the boundary of $\mathcal{X}$.
For many numerical problems, there is a connection between the condition number and the reciprocal distance to the ill-posed locus, often called a \emph{condition number theorem} \cite{demmel_condition_1987, BCSS}. The above shows that the two-factor decomposition admits such a connection as well.

\section{Condition number of orthogonal Tucker decompositions}
\label{sec: Tucker}

As another application of the proposed theory,
in this section, we study the condition number of a different rank-revealing decomposition, this time in the context of tensors.
Given a tensor 
\[
\mathpzc{S} = \sum_{i=1}^R {s}_{i,1} \otimes {s}_{i,2} \otimes \dots \otimes {s}_{i, D} \in \mathbb{R}^{k_1\times k_2\times\cdots\times k_D},
\]
where $s_{i,j} \in \mathbb{R}^{k_j}$ are vectors, 
the tensor product\footnote{In case of unfamiliarity with the tensor product, all occurrences of $\otimes$ may be interpreted as the Kronecker product of matrices and vectors.} of the matrices $U_j \in \mathbb{R}^{n_j \times k_i}, j=1,\ldots,D,$ acts linearly on $\mathpzc{S}$ as 
\begin{equation}
    \label{eq: Tucker product}
    \xT = 
(U_1 \otimes \dots \otimes U_D) \mathpzc{S} = \sum_{i=1}^R \left(U_1{s}_{i, 1}\right) \otimes \left(U_2 {s}_{i,2}\right) \otimes \dots \otimes \left(U_D {s}_{i,D}\right)
.\end{equation}
The resulting tensor $\xT$ lives in $\mathbb{R}^{n_1\times n_2\times\cdots\times n_D}$.
When $D=2$, $\mathpzc{S}$ is a matrix and the above expression can be simplified to $U_1 \mathpzc{S} U_2^T$. The \emph{Tucker decomposition problem} \cite{tucker1966some} takes a tensor $\xT$ as in \cref{eq: Tucker product} and asks to recover the factors $U_1, \dots, U_D, \mathpzc{S}$. It is common to impose that all columns of $U_i$ are orthonormal for each $i$, in which case \cref{eq: Tucker product} is sometimes called an \emph{orthogonal Tucker decomposition}.  

To formulate the problem more precisely, we introduce some notation.
For the above tensor $\mathpzc{S}$, the $j$th flattening is the matrix
\[
\mathpzc{S}_{(j)} = \sum_{i=1}^R {s}_{i,j} \cdot \mathrm{vec}(s_{i,1} \otimes \dots \otimes s_{i,j-1} \otimes s_{i,j+1} \otimes \dots \otimes s_{i,D})^T
,\]
where $\mathrm{vec}(x_1 \otimes \dots \otimes x_D)$ is the Kronecker product of the vectors $x_1, \dots, x_D$. If $\mathpzc{S}$ is a matrix (i.e., $D=2$), then $\mathpzc{S}_{(1)} = \mathpzc{S}$ and $\mathpzc{S}_{(2)} = \mathpzc{S}^T$.
The \emph{multilinear rank} of $\mathpzc{S}$ is the tuple $\mu(\mathpzc{S}) := (\rank \mathpzc{S}_{(1)},\dots,\rank \mathpzc{S}_{(D)})$. We say that $\mathpzc{S}$ has \emph{full} multilinear rank if $\mu(\mathpzc{S}) = (k_1,\dots,k_D)$. The set of such tensors is written as $\mathbb{R}^{k_1 \times \dots \times k_D}_\star$. 

\begin{definition}
    \label{def: orth Tucker}
    The \emph{rank-revealing orthogonal Tucker decomposition problem} at $\mathpzc{X}\in\mathbb{R}^{n_1\times\cdots\times n_D}$ is the inverse problem $\tuck(U_1,\dots,U_D,\mathpzc{S}) - \xT = 0$ where 
    \begin{linenomath*}
    \begin{align*}
        \tuck:\, \overbrace{\mathrm{St}(n_1, k_1) \times \dots \times \mathrm{St}(n_D, k_D) \times \mathbb{R}^{k_1 \times \dots \times k_D}_\star}^{\calY} &\to \mathbb{R}^{n_1 \times \dots \times n_D} \\
        (U_1, \dots, U_D, \mathpzc{S}) &\mapsto (U_1 \otimes \dots \otimes U_D) \mathpzc{S}
    .\end{align*}
    \end{linenomath*}
\end{definition}
The reason for considering $\mathbb{R}^{k_1 \times \dots \times k_D}_\star$ in the domain rather than its closure $\mathbb{R}^{k_1 \times \dots \times k_D}$ is to ensure that $\rank D\tuck$ is constant \cite{Koch2010}. 
If $\y = (U_1,\dots,U_D, \mathpzc{S})$ solves the orthogonal Tucker decomposition problem, then all other solutions can be parameterised as
\begin{equation}
    \label{eq: fibres Tucker}
\tuck^{-1}(\tuck(y)) = \left\{ 
   (U_1 Q_1^T, \dots, U_D Q_D^T, (Q_1 \otimes \dots \otimes Q_D) \mathpzc{S}) 
   \mid
   Q_j \in O(k_j)
\right\}
.\end{equation}
In the literature on multi-factor principal component analysis, these invariances are sometimes called ``rotational'' degrees of freedom \cite{kiers_three-way_2001}.

To eliminate some degrees of freedom, it has been proposed to impose constraints on the core tensor $\mathpzc{S}$. For instance, one could optimise a measure of sparsity on $\mathpzc{S}$ to enhance the interpretability of the decomposition \cite{kiers_three-way_2001,martin_jacobi-type_2008}. Alternatively, the \emph{higher-order singular value decomposition} (HOSVD) \cite{DeLathauwer2000} imposes pairwise orthogonality of the slices of $\mathpzc{S}$. The HOSVD has the advantages of being definable in terms of singular value decompositions and giving a quasi-optimal solution to Tucker approximation problems \cite[Theorem 10.3]{hackbusch_tensor_2012}. It is unique if and only if the singular values of all $\xT_{(i)}$ are simple for all $i$. 

However, for these constrained Tucker decompositions, important geometric properties of the set of feasible values of $\mathpzc{S}$ remain elusive. For the HOSVD, it remains unknown precisely what sets of singular values of the flattenings $\mathpzc{S}_{(i)}$ are feasible \cite{hackbusch_interconnection_2017}. Furthermore, for two tensors $\mathpzc{S}, \mathpzc{S}'$ with HOSVD constraints, the singular values of $\mathpzc{S}_{(i)}$ and $\mathpzc{S}'_{(i)}$ may be identical for all $i$ even if $\mathpzc{S}$ and $\mathpzc{S}'$ are in distinct $O(k_1) \times \dots \times O(k_D)$-orbits \cite{hackbusch_interconnection_2017}.
For these reasons, we ignore the constraints to make the orthogonal Tucker decomposition (usually) unique and study the underdetermined problem of \cref{def: orth Tucker} instead.

To determine the condition number, we need a Riemannian metric for the domain and codomain of $\tuck$. A simple metric is the Euclidean or Frobenius inner product, which is defined on (the tangent spaces of) $\mathbb{R}^{n_1 \times \dots \times n_D}$, $\mathbb{R}^{k_1 \times \dots \times k_D}_\star$, and $\mathrm{St}(n_i, k_i) \subset \mathbb{R}^{n_i \times k_i}$. Thus, we may use the associated product metric for $\calY$. We call this metric of $\calY$ and the Euclidean inner product in $\mathbb{R}^{n_1 \times \dots \times n_D}$ \emph{absolute (Riemannian) metrics}. The norm induced by these metrics is the Euclidean or Frobenius norm, which we denote by $\norm{\cdot}_F$.

Since the Stiefel manifold is bounded in the Euclidean metric and $\mathbb{R}^{k_1 \times \dots \times k_D}_\star$ is not, it may be more interesting to work with \emph{relative} metrics.
For a punctured Euclidean space $\mathbb{E} \backslash \{0\}$ with inner product $\langle\cdot,\cdot\rangle$, the relative metric for two vectors $\xi, \eta \in T_{p} \mathbb{E}$ is $\frac{\langle\xi,\eta\rangle}{\langle p,p\rangle}$. Note that this defines a smooth Riemannian metric.
We lift this definition so that the relative metric in $\calY$ is the product metric of the relative metric in $\mathbb{R}^{k_1 \times \dots \times k_D}_\star$ and the Frobenius inner products on all $\mathrm{St}(n_i, k_i)$.

\begin{proposition}
    \label{prop: condition Tucker}
    Let $(U_1 \otimes \dots \otimes U_D) \mathpzc{S}$ be an orthogonal Tucker decomposition of a tensor $\xT \in \mathbb{R}^{n_1 \times \dots \times n_D}$ such that $\mathpzc{S} \in \mathbb{R}^{k_1 \times \dots \times k_D}_\star$ and $k_i < n_i$ for at least one $i$.
    Let $\sigma := \min_{i:\, k_i < n_i} \sigma_{k_i}(\mathpzc{S}_{(i)})$.
    Then,
    \begin{enumerate}
        \item $\lscond^{inv}[\tuck](U_1,\dots,U_D,\mathpzc{S}) = \max \left\{ \frac{1}{\sigma}, 1 \right\}$ for the absolute metric, and\vspace{3pt}
        \item $\lscond^{inv}[\tuck](U_1,\dots,U_D,\mathpzc{S}) = \frac{\norm{\xT}_F}{\sigma}$ for the relative metric.
    \end{enumerate}
\end{proposition}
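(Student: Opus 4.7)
The plan is to apply \cref{prop: condition of inverse problems}, which reduces the task to computing the reciprocal of the smallest nonzero singular value $\sigma_{\min}$ of $D\tuck$ at $(U_1,\dots,U_D,\mathpzc{S})$. I will parametrize an arbitrary tangent vector $v=(\dot U_1,\dots,\dot U_D,\dot{\mathpzc{S}})$ of $\calY$ by splitting each Stiefel direction as $\dot U_j = U_j\tilde A_j + U_j^\perp B_j$, where $\tilde A_j\in\mathfrak{so}(k_j)$, $B_j\in\mathbb{R}^{(n_j-k_j)\times k_j}$ is arbitrary, and $U_j^\perp$ is any orthogonal complement of $U_j$ in $\mathbb{R}^{n_j}$.

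The first step exploits the orthogonal direct sum $\mathbb{R}^{n_i}=\operatorname{col}(U_i)\oplus\operatorname{col}(U_i^\perp)$, which lifts to a decomposition of $\mathbb{R}^{n_1\times\dots\times n_D}$ into $2^D$ mutually orthogonal subspaces indexed by subsets of $\{1,\dots,D\}$ (by selecting $U_i$ or $U_i^\perp$ in each mode). Because each summand of $D\tuck(v)$ contains at most one $U_j^\perp$ factor, only $D+1$ of these subspaces are populated, yielding the orthogonal decomposition
\[
\|D\tuck(v)\|_F^2 = \left\|\dot{\mathpzc{S}} + \sum_{j=1}^D \tilde A_j\cdot_j\mathpzc{S}\right\|_F^2 + \sum_{j=1}^D \|B_j\,\mathpzc{S}_{(j)}\|_F^2,
\]
where $\cdot_j$ denotes mode-$j$ multiplication. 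Crucially, the $B_j$ contributions are already decoupled.

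Next, I will determine $\ker D\tuck$ by differentiating the fibre parametrization in \cref{eq: fibres Tucker}: it consists of directions $(-U_1 A_1,\dots,-U_D A_D,\sum_j A_j\cdot_j\mathpzc{S})$ for skew $A_j$. Writing out the orthogonality of $v$ to this kernel under the absolute product metric produces the linear constraint $\tilde A_j = \frac{1}{2}(\dot{\mathpzc{S}}_{(j)}\mathpzc{S}_{(j)}^T - \mathpzc{S}_{(j)}\dot{\mathpzc{S}}_{(j)}^T)$, so $\tilde A_j$ is the skew part of $\dot{\mathpzc{S}}_{(j)}\mathpzc{S}_{(j)}^T$. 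The technical heart of the argument is then the identity
\[
\langle\dot{\mathpzc{S}},\tilde A_j\cdot_j\mathpzc{S}\rangle_F = \|\tilde A_j\|_F^2,
\]
which follows because skew matrices are Frobenius-orthogonal to symmetric ones, so the inner product with $\dot{\mathpzc{S}}_{(j)}\mathpzc{S}_{(j)}^T$ only picks out its skew part, namely $\tilde A_j$ itself. Substituting this into the previous display and using $\|B_j\mathpzc{S}_{(j)}\|_F^2\ge\sigma_{k_j}(\mathpzc{S}_{(j)})^2\|B_j\|_F^2\ge\sigma^2\|B_j\|_F^2$ (valid whenever $k_j<n_j$, and vacuous otherwise) gives
\[
\|D\tuck(v)\|_F^2 \ge \|\dot{\mathpzc{S}}\|_F^2 + 2\sum_j\|\tilde A_j\|_F^2 + \sigma^2\sum_j\|B_j\|_F^2 \ge \min(1,\sigma^2)\,\|v\|_F^2,
\]
so $\sigma_{\min}\ge\min(1,\sigma)$. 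Sharpness follows from two explicit vectors in $(\ker D\tuck)^\perp$: taking $\dot{\mathpzc{S}}=0$ with a single rank-one $B_{j^\star}$ whose right factor is a left singular vector of $\mathpzc{S}_{(j^\star)}$ corresponding to $\sigma_{k_{j^\star}}(\mathpzc{S}_{(j^\star)})$ achieves ratio $\sigma^2$, while taking $\dot{\mathpzc{S}}=\mathpzc{S}$ with every other component zero achieves ratio $1$, since then $\dot{\mathpzc{S}}_{(j)}\mathpzc{S}_{(j)}^T=\mathpzc{S}_{(j)}\mathpzc{S}_{(j)}^T$ is symmetric, forcing $\tilde A_j=0$. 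This settles part~(1).

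For part~(2), rescaling via $\dot{\mathpzc{S}}\leftrightarrow\dot{\mathpzc{S}}/s$ and $\dot{\xT}\leftrightarrow\dot{\xT}/s$ with $s=\|\xT\|_F=\|\mathpzc{S}\|_F$ identifies the relative-metric condition number at $(U_1,\dots,U_D,\mathpzc{S})$ with the absolute condition number at $(U_1,\dots,U_D,\mathpzc{S}/s)$. Part~(1) then gives $\max(1,s/\sigma)=\|\xT\|_F/\sigma$, where the $\max$ collapses because $\sigma\le\|\mathpzc{S}_{(j)}\|_F\le\|\mathpzc{S}\|_F=s$. I expect the main obstacle to be the bookkeeping that pins $\tilde A_j$ to a specific function of $\dot{\mathpzc{S}}$ on the orthogonal complement of the kernel, together with the identity $\langle\dot{\mathpzc{S}},\tilde A_j\cdot_j\mathpzc{S}\rangle_F=\|\tilde A_j\|_F^2$ that converts this coupling into an unconditional lower bound.
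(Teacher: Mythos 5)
Your proof is correct, and at the architectural level it mirrors the paper's: both reduce the claim to computing the smallest nonzero singular value of $D\tuck$ via \cref{prop: condition of inverse problems}, split each Stiefel tangent space as $U_j\cdot(\text{skew})\oplus U_j^\perp\cdot\mathbb{R}^{(n_j-k_j)\times k_j}$, use the resulting orthogonal decomposition of the image into $D+1$ blocks to decouple the $B_j$ contributions (whence the singular values $\sigma_{k_j}(\mathpzc{S}_{(j)})$), and use the test vector $(0,\dots,0,\mathpzc{S})$ to cap the smallest nonzero singular value at $1$. Where you genuinely differ is in the lower bound for the coupled block: the paper writes that block in coordinates as $(U_1\otimes\cdots\otimes U_D)\left[\mathds{I}_{\Pi}\ \ \tilde J\right]$, so its Gram matrix is $\mathds{I}_{\Pi}+\tilde J\tilde J^T\succeq\mathds{I}_{\Pi}$ and its nonzero singular values are automatically at least $1$, with no reference to the kernel; you instead characterise $(\ker D\tuck)^\perp$ by the constraint $\tilde A_j=\operatorname{skew}(\dot{\mathpzc{S}}_{(j)}\mathpzc{S}_{(j)}^T)$ and convert the cross term into $2\sum_j\|\tilde A_j\|_F^2\ge 0$. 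Your route is slightly longer but buys an explicit description of the normal space to the fibre, which the paper uses only implicitly. Similarly, for part (2) you reduce to part (1) by the isometric rescaling $\mathpzc{S}\mapsto\mathpzc{S}/\norm{\xT}_F$ rather than carrying the normalisation $\alpha$ through the whole computation; that is a clean simplification, and you correctly note that the $\max$ collapses because $\sigma\le\norm{\mathpzc{S}}_F$. The one step worth making fully explicit in a write-up is that the differentiated fibre \cref{eq: fibres Tucker} exhausts $\ker D\tuck$, so that your two sharpness vectors genuinely lie in $(\ker D\tuck)^\perp$ (Courant--Fischer gives an upper bound on the smallest \emph{nonzero} singular value only for vectors orthogonal to the kernel); this follows from your own orthogonal decomposition of $\norm{D\tuck(v)}_F^2$ together with the full multilinear rank of $\mathpzc{S}$, since $B_j\mathpzc{S}_{(j)}=0$ forces $B_j=0$.
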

\begin{proof}
Throughout this proof, we abbreviate $D\tuck(U_1,\dots,U_D,\mathpzc{S})$ to $D\tuck$.
Following \cref{prop: condition of inverse problems}, we compute the smallest nonzero singular value of $D\tuck$ for both metrics. The proof consists of a construction of this matrix with respect to an orthonormal basis and a straightforward calculation of its singular values.
To use the same derivation for the two metrics, we let $\alpha = 1$ for the absolute metric and $\alpha = \norm{\mathpzc{S}}_F = \norm{\xT}_F$ for the relative metric.

For all $i$, let $\{\Omega_i^j \mid 1 \le j \le \frac{1}{2}k_i (k_i - 1) \}$ be an orthonormal basis of the $k_i \times k_i$ skew-symmetric matrices. Let $U_i^\perp \in \mathrm{St}(n_i, n_i - k_i)$ be any matrix so that $[U_i \quad U_i^\perp]$ is orthogonal. If $n_i = k_i$, we write $U_i^\perp$ formally as an $n_i \times 0$ matrix. Let $\{V_i^p \mid 1 \le p \le (n_i - k_i)k_i \}$ be a basis of $\mathbb{R}^{(n_i - k_i) \times k_i}$. Then, $\mathcal{B}_i := \{ U_i \Omega_i^j + U_i^\perp V_i^p \}$ forms an orthonormal basis of $T_{U_i}\mathrm{St}(n_i, k_i)$. If $\{E_l \,\vert\, 1 \le l \le \prod_{i=1}^D k_i\}$ is the canonical basis of $\mathbb{R}^{k_1 \times \dots \times k_D}$, then $\mathcal{B}_0 := \{\alpha E_l\}$ is an orthonormal basis of $\mathbb{R}^{k_1 \times \dots \times k_D}$. The product of these bases gives a canonical orthonormal basis for $\calY$.

Similarly, an orthonormal basis of $\mathbb{R}^{n_1 \times \dots \times n_D}$ is $\{\alpha \hat{E}_j\}$ where the $\hat{E}_j$ are the canonical basis vectors of $\mathbb{R}^{n_1 \times \dots \times n_D}$. In other words, expressing a vector in orthonormal coordinates is equivalent to division by $\alpha$.

Next, we compute the differential of $\tuck$. For general tangent vectors $\dot{\mathpzc{S}} \in T_{\mathpzc{S}}\mathbb{R}^{k_1 \times \dots \times k_D}_\star$ and $\dot{U}_i \in T_{U_i}\mathrm{St}(n_i, k_i)$, we have, in coordinates,
\begin{linenomath*}
\begin{align*}
D\tuck[0,\dots,0,\dot{\mathpzc{S}}] &= \alpha^{-1}
(U_1 \otimes \dots \otimes U_D) \dot{\mathpzc{S}} \quad \text{and} \\
D\tuck[0,\dots,\dot{U}_i,\dots,0] &=  \alpha^{-1} (U_1 \otimes \dots \otimes U_{i-1} \otimes \dot{U}_i \otimes U_{i+1} \otimes \dots \otimes U_D) \mathpzc{S}
,\end{align*}
\end{linenomath*}
which is extended linearly for all tangent vectors.
The condition that $U_i^T U_i^\perp = 0$ for all $i$ splits the image of $DG$ into pairwise orthogonal subspaces. That is, for any $i$ and $k$, $DG[0,\dots,U_i^\perp V_i^p,\dots,0]$ is orthogonal to both $D\tuck[0,\dots,0,\dot{\mathpzc{S}}]$ for all $\dot{\mathpzc{S}}$ and $D\tuck[0,\dots,\dot{U}_{i'},\dots,0]$ for all $\dot{U}_{i'}$ where $i' \ne i$.

To decompose the domain of $D\tuck$ as a direct sum of pairwise orthogonal subspaces, we write $T_{U_i} \mathrm{St}(n_i, m_i) = W_i \oplus W_i^\perp$ for all $i$, where $W_i := \mathrm{span}\{U_i \Omega_i^j\}$ and $W_i^\perp = \mathrm{span}\{U_i^\perp V_i^p\}$. The restriction of $D\tuck$ to $W_1 \times \dots \times W_D \times T_{\mathpzc S} \mathbb{R}^{k_1 \times \dots \times k_D}_\star$ can be represented in coordinates by a matrix $J_0$. Likewise, for $i = 1,\dots, D$, we write the restriction of $D\tuck$ to $\{0\} \times \dots \times W_i^\perp \times \dots \times \{0\}$ in coordinates as $J_i$. Then $D\tuck$ can be represented as $J = \begin{bmatrix}J_0 & J_1 & \dots & J_D\end{bmatrix}$.

By the preceding argument, these $D+1$ blocks that make up $J$ are pairwise orthogonal. Therefore, the singular values of $J$ are the union of the singular values of $J_0,\dots,J_{D}$. If $n_i = k_i$ for some $i$, then $J_i$ is the matrix with zero columns, whose singular values are the empty set.

Let $\Pi := \prod_{i=1}^D k_D$.
To bound the singular values of $J_0$, we compute its first $\Pi$ columns as
$
D\tuck[0,\dots,0,\alpha E_l] = (U_1 \otimes \dots \otimes U_D) E_l
$
for all $l = 1,\dots,\Pi$.
Note that all other columns have a factor $(U_1 \otimes \dots \otimes U_D)$ as well. 
Thus, we have 
\[
J_0 = (U_1 \otimes \dots \otimes U_D) \left[ \mathds{I}_{\Pi} \quad  \tilde{J} \right]
,\]
where $\tilde{J}$ is an unspecified matrix. We can omit the orthonormal factor $U_1 \otimes \dots \otimes U_D$ when computing the singular values of $J_0$. Therefore, $J_0$ has $\Pi$ nonzero singular values, which are the square roots of the eigenvalues of $\mathds{I}_{\Pi} + \tilde{J}\tilde{J}^T$. It follows that the $\Pi$ largest singular values of $J_0$ are bounded from below by $1$ and all other singular values of $J_0$ are $0$.

Next, consider any $J_i$ where $i \ge 1$ and $n_i > k_i$. It represents the linear map 
\[
J_i:\, V \mapsto \alpha^{-1} (U_1 \otimes \dots \otimes U_{i - 1} \otimes U_i^\perp V \otimes U_{i+1} \otimes \dots \otimes U_D) \mathpzc{S}
.\]
Up to reshaping, the above is equivalent to 
\[
J_i:\, \operatorname{vec} V \mapsto \alpha^{-1} \left(U_i^\perp \otimes ((U_1 \otimes \dots \otimes U_{i-1} \otimes U_{i+1} \otimes \dots \otimes U_D) \mathpzc{S}_{(i)}^T)\right) \operatorname{vec} V
.\]
To calculate the singular values of $J_i$, we factor out $U_i^\perp$ and all $U_j$ to obtain $J_i \cong \alpha^{-1} \mathds{I}_{n_i - k_i} \otimes \mathpzc{S}_{(i)}^T$. Its singular values are the singular values of $\alpha^{-1} \mathpzc{S}_{(i)}$ with all multiplicities multiplied by $n_i - k_i$.

Finally, we show geometrically that the smallest nonzero singular value $\varsigma$ of $D\tuck$ is at most 1. By the Courant-Fisher theorem, $\varsigma \le \norm{D\tuck[\xi]} / \norm{\xi}$ for all $\xi \in \ker(D\tuck)^\perp \backslash \{0\}$.
Pick $\xi := (0,\dots,0,\mathpzc{S})$.
Since 
\[
\tuck^{-1}(\xT) = \left\{ 
    (U_1 Q_1^T,\dots,\,U_D Q_D^T, (Q_1 \otimes \dots \otimes Q_D) \mathpzc{S})
        \mid 
    Q_i \in O(k_i)
    \right\}
,\]
the projection of $\tuck^{-1}(\xT)$ onto the last component is a submanifold of the sphere over $\mathbb{R}^{k_1 \times \dots \times k_D}$ of radius $\norm{\mathpzc{S}}_F$. It follows that
\[
\xi \in N_{(U_1, \dots, U_D, \mathpzc{S})}\tuck^{-1}(\xT) = (\ker D\tuck)^\perp,
\]
where $N$ denotes the normal space.
Since $D\tuck[\xi] = (U_1 \otimes \dots \otimes U_D) \mathpzc{S}$, we obtain $\varsigma \le \norm{D\tuck[\xi]} / \norm{\xi} = 1$.

In conclusion, we have established the following three facts. First, for all $i$ such that $k_i < n_i$, all singular values of $\alpha^{-1} \mathpzc{S}_{(i)}$ are singular values of $D\tuck$. Second, any other nonzero singular values of $D\tuck$ must be bounded from below by 1. Third, the smallest nonzero singular value of $D\tuck$ is at most 1. 
By \cref{prop: condition of inverse problems}, this proves the statement about the absolute metric.
For the relative metric, note that $\sigma \le \norm{\mathpzc{S}}_F = \norm{\xT}_F = \alpha$ for all $i$. Thus, the smallest nonzero singular value of $D\tuck$ is $\sigma / \alpha = \sigma/\norm{\xT}_F \le 1$.
\end{proof}

The expression for the absolute metric can be interpreted as follows: if $k_i < n_i$ and $\sigma_{(i)} := \sigma_{k_i}(\mathpzc{S}_{(i)})$ is small, then the factor $U_i$ is sensitive to perturbations of $\xT$.
Indeed, assume that the last column of $U_i$ is a left singular vector corresponding to the singular value $\sigma_{(i)}$. Then, we can generate the following small perturbation of $\xT$ that corresponds to a unit change in the decomposition. Let $\tilde{U}_i$ be a matrix such that $U_i e_j = \tilde{U}_i e_j$ for $1 \le j < k_1$ and $U_i^T \tilde{U}_i e_{k_i} = 0$. 
The perturbed tensor $\widetilde{\xT} = \tuck(U_1, \dots, U_{d-1}, \tilde{U}_i, U_{d+1}, \dots, U_D, \mathpzc{S})$ is only at a distance $\sigma_{(i)}$ away from $\xT$.

On the other hand, if $\sigma \ge 1$, then no unit perturbation of $\xT$ tangent to $\tuck(\calY)$ can change the orthogonal Tucker decomposition more than the tangent vector $\Delta \xT = \xT / \norm{\xT}_F$ constructed in the proof of \cref{prop: condition Tucker}. 

\begin{remark}[Condition number of a singular value decomposition]
    For a matrix $\X$, computing an orthogonal Tucker decomposition of the form $\X = U_1 S U_2^T$ is a relaxation of the singular value decomposition that does not impose a diagonal structure on $S$.
    A condition number for the subspaces spanned by the singular vectors was studied in \cite{sunPerturbationAnalysisSingular1996,vannieuwenhoven_condition_2023}.
    The condition number for the $i$th singular vector diverges as $\left\vert\sigma_{i}(\X) - \sigma_{i+1}(\X)\right\vert \to 0$. By contrast, the condition number of computing an orthogonal Tucker decomposition depends on $\sigma_{k_1}(\X)$ and $\norm{\X}_F$ only.
    Thus, computing individual singular vectors may be arbitrarily ill-conditioned even if the condition number of the Tucker decomposition is arbitrarily close to one. For instance, this occurs for the $3 \times 3$ diagonal matrix $X$ whose diagonal elements are $(1 + \varepsilon, 1, 0)$ and $0 < \varepsilon \ll 1$.
    Informally, this observation shows that
    restricting $S$ to be diagonal in an orthogonal Tucker decomposition can make computing the resulting singular value decomposition arbitrarily more ill-conditioned than computing an orthogonal Tucker decomposition.
\end{remark}

\section{Numerical verification of the error estimate}
\label{sec: experiments}

\Cref{eq: 1st order error bound LS} is only an asymptotic estimate of the optimal forward error.
A common practice for working with condition numbers is to neglect the asymptotic term $o(d_{\calX}(\x_0, \x))$ and turn \cref{eq: 1st order error bound LS} into the approximate upper bound 
\begin{equation}
    \label{eq: 1st order err bound approx}
\min_{\substack{\y \in \calY,\\ F(\x,\y) = c}} d_{\calY}(\y_0, \y)
\lesssim 
\lscond[F^{-1}(c)](\x_0, \y_0) \cdot d_{\calX}(\x_0, \x)
.\end{equation}
In this section, we determine numerically if this approximation is accurate for random initial solutions $(\x_0, \y_0)$ and random perturbations $\x$. We restrict ourselves to the orthogonal Tucker decomposition of third-order tensors.

\subsection{Model}
The model is defined as follows. We pick a parameter $\alpha > 0$ to control the condition number. We generate matrices $A, B \in \mathbb{R}^{k \times (k - 1)}$ and a tensor $\mathpzc{H} \in \mathbb{R}^{k \times k \times k}$ with i.i.d.\ standard normally distributed entries. Then, we set $\mathpzc{S}' \in \mathbb{R}^{k \times k \times k}$ so that $\mathpzc{S}'_{(1)} = (AB^T + \alpha \mathds{I})\mathpzc{H}_{(1)}$ and normalise $\mathpzc{S}_{0} := \mathpzc{S'} / \norm{\mathpzc{S'}}_F$. Finally, we generate the factor $U_i^0$ by taking the $Q$-factor in the QR decomposition of a normally distributed $n_i \times k$ matrix, for $i=1,2,3$.
To the above matrices and tensor, we associate the Tucker decomposition $\mathpzc{X}_0 = \tuck(U_1^0,U_2^0,U_3^0,\mathpzc{S}_{0})$. In the following, we abbreviate $\kappa := \lscond^{inv}[\tuck](U_1^0, U_2^0, U_3^0, \mathpzc{S}_0)$. Note that choosing a small value of $\alpha$ generally makes the problem ill-conditioned by \cref{prop: condition Tucker}.

The condition number measures the change to the decomposition for \emph{feasible} perturbations of $\xT_0 \in \mathbb{R}^{n_1\times n_2\times n_3}$. That is, the perturbed input is a point $\xT$ in the image of $\tuck$. Such a point can be generated by first perturbing $\xT_0$ in the ambient space $\mathbb{R}^{n_1 \times n_2 \times n_3}$ as $\xT' := \xT_0 + \varepsilon \Delta\xT$ where $\Delta\xT$ is uniformly distributed over the unit-norm tensors in $\mathbb{R}^{n_1\times n_2\times n_3}$ and $\varepsilon > 0$ is some parameter.
Then, a feasible input can be obtained by applying the ST-HOSVD algorithm \cite{Vannieuwenhoven2012} to $\xT'$ with truncation rank $(k,k,k)$. This gives a quasi-optimal projection $\xT$ of $\xT'$ onto the image of $\tuck$ and a decomposition $\xT = \tuck(U_1, U_2, U_3, \mathpzc{S})$. 

\subsection{Estimate of the optimal forward error}

Since $\norm{\xT_0}_F = \norm{\mathpzc{S}_0}_F = 1$, the absolute and relative metric in \cref{prop: condition Tucker} coincide, and they both correspond to the product of the Euclidean inner products in $\mathbb{R}^{k_1 \times k_2 \times k_3}$ and all $\mathrm{St}(n_i, k_i)$. By \cref{eq: fibres Tucker}, the orthogonal Tucker decomposition of $\xT$ is determined up to a multiplication by $Q_1, Q_2, Q_3 \in O(k)$. Thus, the square of the left-hand side of \cref{eq: 1st order err bound approx} is\footnote{
    Although the definition of the condition number uses the geodesic distance $d_\gamma$, the following expression uses the Euclidean distance $d_E$. However, it can be shown that for a point $\x_0 \in \calX$ where $\calX$ is  a Riemannian submanifold of Euclidean space, we have $d_E(x_0, x) = d_\gamma(x_0, x)(1 + o(1))$ as $x \to x_0$.
} 
\begin{equation}
    \label{eq: optimisation distance Tucker}
E^2 := 
    \min_{Q_1,Q_2, Q_3 \in O(k)} \left\{ 
        \norm{\mathpzc{S}_0 - (Q_1^T \otimes Q_2^T \otimes Q_3^T) \mathpzc{S}}_F^2
        +
        \sum_{i = 1}^3 \norm{U_i^{0} - U_i Q_i}_F^2
    \right\}
.\end{equation}
Determining the accuracy of \cref{eq: 1st order err bound approx} requires evaluating $E$ numerically.
Since we are not aware of any closed-form expression of $E$, we approximate this by solving the above optimisation problem using a simple Riemannian gradient descent method in \texttt{Manopt.jl} \cite{bergmann_manoptjl_2022}. In the following, $\widehat{E}^2$ denotes the numerical solution to \cref{eq: optimisation distance Tucker}.

Assuming that $\widehat{E}$ is an accurate approximation of $E$, we check \cref{eq: 1st order err bound approx} by verifying that $\widehat{E} \lesssim \kappa \norm{\xT - \xT_0}_F$. A priori, there are at least two scenarios in which this may fail to be a tight upper bound:
\begin{enumerate}
\item The tolerance of the gradient descent method that computes $\widehat{E}$ is $5 \times 10^{-8}$, so that the numerical error in computing $E$ is about the same order of magnitude. If $E \ll 10^{-8}$, then $\widehat{E}$ is probably a poor approximation of $E$.
\item $E$ cannot be much larger than $1$. Since $\mathrm{St}(n_i, k)$ is a subset of the $n_i\times k$ matrices with Frobenius norm equal to $\sqrt{k}$, we have $\norm{U_i^0 - U_i}_F \le 2\sqrt{k}$. Furthermore, since $\norm{\mathpzc S}_F = \norm{\xT}_F$ and $\norm{\mathpzc S_0}_F = \norm{\xT_0}_F = 1$, it follows from the triangle inequality that $\norm{\mathpzc S - \mathpzc S_0}_F \le 1 + \norm{\xT}_F$. Thus, if $\kappa \norm{\xT - \xT_0}_F \gg 1$, this would overestimate $E$. 
\end{enumerate}
For these reasons, we are only interested in verifying the estimate $\widehat{E} \lesssim \kappa \norm{\xT - \xT_0}_F$ if $\widehat{E} \ge 5 \times 10^{-8}$ and $\kappa\norm{\xT - \xT_0}_F \le 1$.

\subsection{Experimental results}
We generated two datasets as specified by the model above. In the first dataset, we used the parameters $k = 3$ and $(n_1, n_2, n_3) = (5,5,5)$. For each pair $(\alpha, \varepsilon) \in \{10^{-8}, 10^{-4}, 1\} \times \{10^{-14}, 10^{-12.5},\dots,10^{-2}\}$, we generated 2000 Tucker decompositions and perturbations and measured the error. The second dataset was generated the same way, the only difference being that $n_1 = 2000$.

Since the condition number depends only on $\mathpzc{S}_0$, the distribution of the condition number is the same for both datasets. We found that $\kappa$ is approximately equal to $10 / \alpha$, with $1 \le \kappa\alpha \le 100$ in $94.5\%$ of samples. The empirical geometric mean of $\kappa\alpha$ is about $12$. This means that we can roughly control the condition number of the sampled tensor by controlling the parameter $\alpha$.

\Cref{fig: error distribution} shows the distribution of $\widehat{E}/ \kappa \norm{\xT - \xT_0}_F$ for both datasets.
The smaller this quantity, the more pessimistic $\kappa \norm{\xT - \xT_0}_F$ is as an estimate of the forward error for random perturbed tensors $\mathcal{X}$.
In most cases displayed on \cref{fig: error distribution}, $\widehat{E}$ is at least a fraction $0.1$ of its approximate upper bound $\kappa \norm{\xT - \xT_0}_F$. In the case where $2000 = n_1 \gg k = 3$, we have $\widehat{E} \approx 0.5 \kappa \norm{\xT - \xT_0}_F$. These experiments indicate that the approximation $E \lesssim \kappa \norm{\xT - \xT_0}$ is reasonably sharp on average.

The estimate $E \lesssim \kappa \norm{\xT - \xT_0}$ could be sharpened by using a \emph{stochastic condition number}, which estimates the forward error corresponding to uniform random perturbations $\xT$ on a sphere around $\xT_0$ rather than worst-case perturbations. It was shown in \cite{armentanoStochasticPerturbationsSmooth2010} that the stochastic condition number of a map $H: \mathcal X \to \mathcal Y$ could be as low as $\kappa / \sqrt{\dim \mathcal X}$ where $\kappa$ is the condition number. The idea is that there may only be one bad perturbation direction in $\mathcal{X}$, which is unlikely to manifest in practice if $\mathcal{X}$ is high-dimensional. In our case, though, the error is empirically closer to the worst case in the high-dimensional experiment $(n_1 = 2000)$ than in the low-dimensional one $(n_1 = 5)$. This is evidence that, for the decomposition of large tensors of low multilinear rank, the ill-conditioned perturbation directions fill up more of the space. A full stochastic analysis is beyond the scope of this paper.

\begin{figure}[h]
    \centering 
        \includegraphics[width=0.95\textwidth]{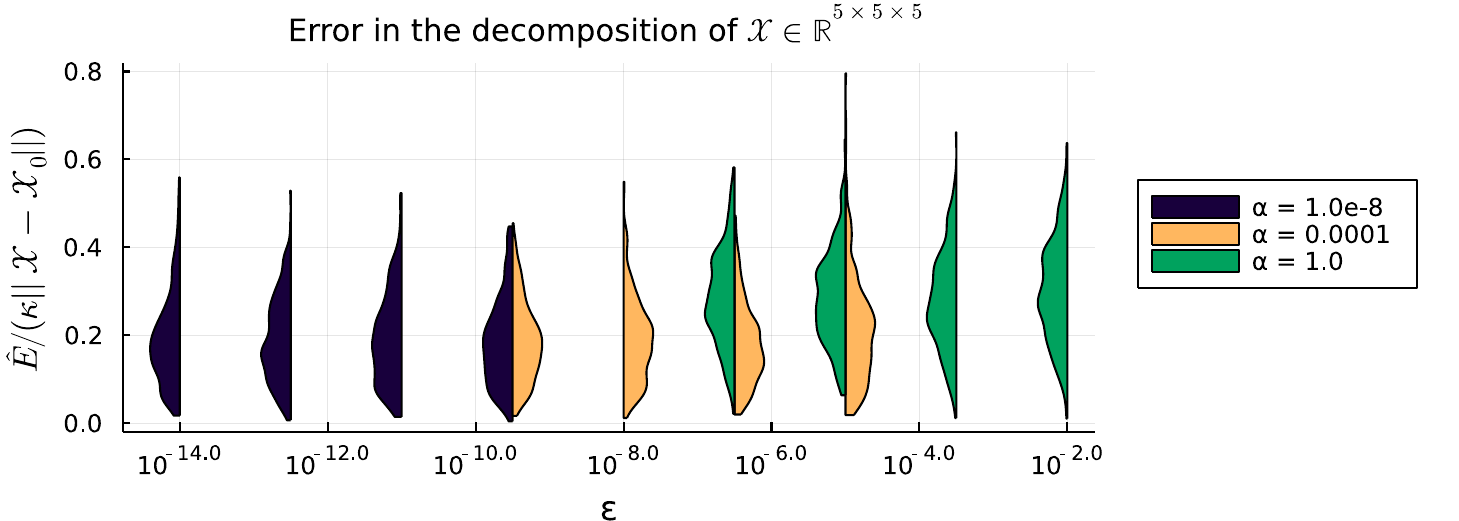}
        \includegraphics[width=0.95\textwidth]{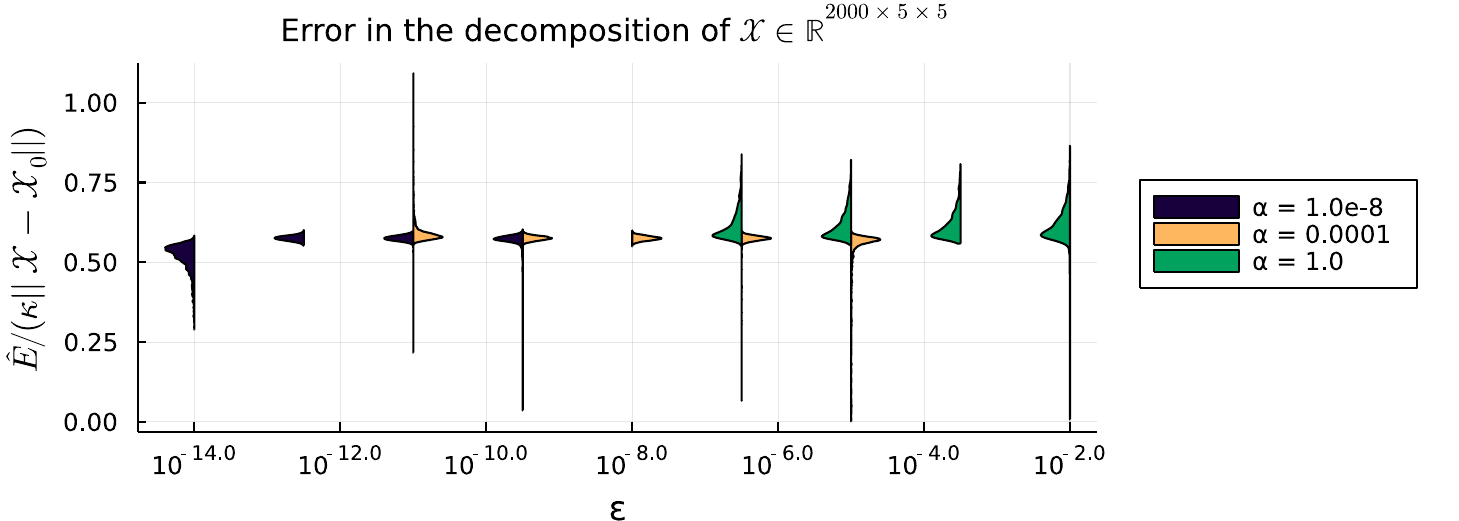}
    \caption{Distribution of $\frac{\widehat{E}}{\kappa \norm{\xT - \xT_0}_F}$ for a Tucker decomposition of the perturbed tensor $\xT$. An estimate of the probability density is plotted for all combinations of $(\alpha, \varepsilon)$ such that $90\%$ of samples satisfy $\widehat{E} \ge 5 \times 10^{-8}$ and $90\%$ of samples satisfy $\kappa \norm{\xT - \xT_0}_F \le 1$.}
    \label{fig: error distribution}
\end{figure}

\section{Acknowledgements}
We thank three anonymous reviewers and editor Ilse Ipsen, whose comments helped greatly with the presentation and clarity of our results. Most content of sections \ref{sec: intro} and \ref{sec: statement of main results} was rewritten, and \cref{sec: prelims} was added, to incorporate their suggested clarifications.

\section{Statements and Declarations}
The authors are not aware of any competing interests.


\end{document}